\documentclass{lms}

\usepackage{amsmath,amsfonts,amscd,amssymb}
\usepackage[all]{xy}
\usepackage{color}

\newtheorem{theorem}{Theorem}[section] 
\newtheorem{lemma}[theorem]{Lemma}     
\newtheorem{corollary}[theorem]{Corollary}
\newtheorem{proposition}[theorem]{Proposition}

\newnumbered{assertion}{Assertion}    
\newnumbered{conjecture}{Conjecture}  
\newnumbered{definition}{Definition}
\newnumbered{hypothesis}{Hypothesis}
\newnumbered{remark}{Remark}
\newnumbered{note}{Note}
\newnumbered{observation}{Observation}
\newnumbered{problem}{Problem}
\newnumbered{question}{Question}
\newnumbered{algorithm}{Algorithm}
\newnumbered{example}{Example}
\newunnumbered{notation}{Notation} 
\newunnumbered{examples}{Examples}

\title[Extending valuations to complete local domains.]
{Extending valuations of local domains to complete local domains without changing the value group.}

\author[F.J. Herrera, M. A. Olalla, M. Spivakovsky and B. Teissier]{F.J. Herrera Govantes, M. A. Olalla Acosta, M.
Spivakovsky and B. Teissier}

\classno{12J20 (primary), 16W60, 14B05, 14B25, 13J15 (secondary)}

\extraline{The second author is partially supported by PID2024-156912NB-I00 financied by MICIU/AEI/10.13039/501100011033 and FEDER.}

\begin{document}

\let\noi=\noindent
\let\ss=\underline
\let\sur=\overline
\let\sous=\underline
\newcommand{\bnb}{\sur{B}_{\ss{n}}}
\newcommand{\End}{\textrm{End}}
\newcommand{\id}{\textrm{Id}}
\newcommand{\rg}{\mbox{rg}}
\newcommand{\rk}{\mbox{rk}}
\newcommand{\im}{\mbox{im}}
\newcommand{\ir}{ {\mathcal I} }
\newcommand{\dtr}{\mbox{degtr}}
\def\N{\mathbf{N}} 
\def\Z{\mathbf{Z}} 
\def\R{\mathbf{R}} 
\def\C{\mathbf{C}} 
\def\Q{\mathbf{Q}} 
\def\P{\mathcal{P}}
\def\F{\mathbf{F}}
\def\Tt{\mathcal{T}}
\def\sper{\mbox{Sper}}
\def\spec{\mbox{Spec}}
\def\lead{\mbox{in}}
\def\gr{\mbox{gr}}
\def\init{\mbox{in}}
\def\G{\mbox{l\hspace{-.47em}G}} 
\def\Or{\mathcal{O}}             
\def\notin{\mbox{$\in$ \hspace{-.8em}/}} 
\def\notsubset{\mbox{$\subset$ \hspace{-.9em}/}} 
\def\Ra{\Rightarrow} 
\def\Lra{\Leftrightarrow} 
\def\sgn{\mbox{sgn}} 

\def\g{{\Gamma}}
\def\h{{\Phi}}
\def\md{{\operatorname{mod}}}
\def\he{{\operatorname{ht}}}

\newcommand{\Bernard}[1]{{\color[rgb]{1,0,0} {#1}}}
\newcommand{\Mark}[1]{{\color[rgb]{0,0,1} {#1}}}
\newcommand {\Miguel}[1]{{\color[rgb]{1,0,1} {#1}}}

\maketitle

\begin{abstract}
Let $(R,m,k)$ be an excellent local noetherian domain with field of fractions $K$. Let
$$
\nu:K^*\twoheadrightarrow\Gamma
$$
be a valuation centered at $R$ and let $R_\nu$ be the corresponding valuation ring of $K$, dominating $R$. Denote by $\widehat R$ the $m$-adic completion of $R$. In the applications of valuation theory to commutative algebra and the study of singularities, one is often induced to replace $R$ by its $m$-adic completion $\widehat R$ and $\nu$ by a suitable extension $\widehat\nu_-$ to $\frac{\widehat R}P$ for a suitably chosen prime ideal $P$, such that
$$
P\cap R=(0).
$$
In a previous article we gave a systematic description of all such extensions $\widehat\nu_-$ and defined the notion of tight extensions that are of particular interest for applications (see Herrera, Olalla, Spivakovsky and Teissier, {\em Extending a valuation centered in a local domain to its formal completion}, Proc. London Math. Soc. (3) 105 (2012) 571--621). If $\widehat\nu_-$ is a tight extension then its graded algebra is birational to that of $\nu$ (the converse is not known and might not be true). In particular, the value group of
$\widehat\nu_-$ is $\Gamma$. The existence of tight extensions was conjectured by the last author (see Teissier, {\em Valuations, deformations, and toric geometry},  Fields Institute Communications, {\bf 33}, 2003, 361-459). In the present paper we give a proof of Teissier's conjecture. An intended application of this result is an important step in two recent approaches to local uniformization in positive characteristic.
\end{abstract}


\section{Introduction}
\label{In}

All the rings in this paper will be commutative with 1.

Let $(R,m,k)$ be a local noetherian domain with field of fractions $K$ and $R_\nu$ a valuation ring, birationally dominating $R$. Let $\nu:K^*\twoheadrightarrow\Gamma$ be the corresponding valuation of $K$,  centered at $R$. Here $\Gamma$ is the value group; it is totally ordered and abelian. Let $\widehat R$ denote the $m$-adic completion of $R$. In the applications of valuation theory to commutative algebra and the study of singularities, one is often induced to replace $R$ by its $m$-adic completion $\widehat R$ and $\nu$ by a suitable extension $\widehat\nu_-$ to $\frac{\widehat R}P$ for a suitably chosen prime ideal $P$, such that $P\cap R=(0)$. In the paper \cite{HOST1} we gave, assuming that $R$ is excellent, a systematic description of all such extensions $\widehat\nu_-$ and among them identified the class of tight extensions. The existence of tight extensions (see Conjecture 1 of \cite{HOST1}) is the restatement in the framework of \cite{HOST1} of a conjecture of the last author going back to 2003 (see \cite{Te}, *proposition* 5.19).\par

The main result of the present paper a proof of Teissier's conjecture:

\noindent
\textbf{Given an excellent local domain $R$ and a valuation $\nu$ of $R$ that is strictly positive on its maximal ideal $m$, there exists a prime ideal $P$ of the $m$-adic completion $\widehat R$ such that $P\bigcap R=(0)$ and an extension $\hat\nu_-$ of $\nu$ to $\frac{\widehat R}{P}$ such that the natural inclusion
\begin{equation}
{\rm gr}_\nu R\subset {\rm gr}_{\hat\nu_-}\widehat R/P\label{eq:inclusion}
\end{equation}
of the associated graded rings is scalewise-birational {\rm (see the definitions below)}. In particular, the valuations $\nu$ and $\hat\nu_-$ have the same value group.}\par

The only assumption about $R$ we ever used in \cite{HOST1} is a weaker and more natural condition than excellence: namely, we require the completion homomorphism $R\rightarrow\widehat R$ to be regular. Local rings $R$ having this property are called G-rings or quasi-excellent.

One specific application we have in mind has to do with the approaches to proving the Local Uniformization Theorem in arbitrary characteristic which are described in \cite{Spi2} and \cite{Te}).\par  These approaches to local uniformization both make important use, in different ways, of the structure of the graded ring ${\rm gr}_\nu R$ associated to the filtration of an excellent local domain $R$ defined by the valuation $\nu$. Extending $\nu$  to a valuation $\hat\nu_-$ of a complete local domain $\widehat R/P$ in such a manner that the corresponding extension ${\rm gr}_\nu R\subset {\rm gr}_{\hat\nu_-}\widehat R/P$ of graded rings is scalewise-birational allows one to have access to the many advantages of completeness without losing the algebraicity of the centers of blowing-up (in \cite{Spi2}) or of the embeddings into appropriate larger affine spaces where a birational toric map will provide local uniformization (in \cite{Te}). \par
Let $r$ denote the (real) rank of $\nu$. Let $(0)=\Delta_r\subsetneqq \Delta_{r-1}\subsetneqq\dots\subsetneqq \Delta_0=\Gamma$ be the isolated subgroups of $\Gamma$ and $P_0=(0)\subsetneqq P_1\subseteq\dots\subseteq P_r=m$ the prime valuation ideals of $R$, which need not, in general, be distinct. Assuming that $R$ is excellent, in \cite{HOST1} we canonically associated to $\nu$ a chain of $2r+2$ prime ideals
$$
H_0\subset H_1\subset\dots\subset H_{2r}=H_{2r+1}=m\widehat R,
$$
satisfying $H_{2\ell}\cap R=H_{2\ell+1}\cap R=P_\ell$ and such that $H_{2\ell}$ is a minimal prime of $P_\ell \widehat{R}$ for $0\le\ell\le r$. We called $H_i$ the $i${\bf-th implicit prime ideal} of $\widehat{R}$, associated to $R$ and $\nu$. The role of these ideals is explained at the beginning of section \ref{definition} below.\par

\noindent The ideals $H_i$ behave well under local blowing ups along $\nu$ (that is, birational local homomorphisms $R\to R'$ such that $\nu$ is centered in $R'$). This means that given a local blowing up $R\rightarrow R'$ along $\nu$, the $i$-th implicit prime ideal $H'_i$ of $\widehat{R'}$ has the property that $H'_i\cap \widehat{R}=H_i$. In this situation we say that the collection $\{H'_i\}$ forms a {\bf tree} (see subsection \ref{trees}).

For a prime ideal $P$ in a ring $R$, $\kappa(P)$ will denote the residue field $\frac{R_P}{PR_P}$.

Let $(0)=\mathbf{m}_0\subsetneqq \mathbf{m}_1\subsetneqq\dots\subsetneqq\mathbf{m}_{r-1}\subsetneqq\mathbf{m}_r=\mathbf{m}_\nu$ be the prime ideals of the valuation ring $R_\nu$. By definitions, our valuation $\nu$ is a composition of $r$ rank one valuations\textcolor{red}{,} $\nu=\nu_1\circ\nu_2\dots\circ\nu_r$, where $\nu_\ell$ is a valuation of the field $\kappa(\mathbf{m}_{\ell-1})$, centered at $\frac{(R_\nu)_{\mathbf{m}_\ell}}{\mathbf{m}_{\ell -1}(R_\nu)_{\mathbf{m}_\ell}}$. In particular, $\nu_\ell$ defines a valuation on $\frac{R}{P_{\ell -1}}$ with value group $\frac{\Delta_{\ell -1}}{\Delta_{\ell }}$ by restriction (see \cite{ZS}, Chapter VI, \S10, p. 43 for the definition of composition of valuations; more information is given below in subsection \ref{trees}, where we interpret each $\mathbf{m}_\ell$ as the limit of a tree of ideals). For each integer
$\ell\in\{1,\dots,r\}$, let $\mu_\ell:=\nu_\ell\circ\nu_{\ell+1}\circ\dots\circ\nu_r$; it is the residual valuation induced by $\nu$ on $\frac{R}{P_{\ell -1}}$.

\subsection{Trees.}\label{trees}

We consider birational $\nu$-\textit{extensions} $R\rightarrow R'$ of local rings, that is, injective homomorphisms such that $R'$ is an $R$-algebra essentially of finite type with field of fractions $K$, dominated by $R_\nu$ (in particular, we have $m'\cap R=m$). Such extensions form a direct system $\{R'\}$ with
\begin{equation}
\lim\limits_{\overset\longrightarrow{R'}}R'=R_\nu.\label{eq:ZariskiRiemann}
\end{equation}
We will consider many direct systems of rings and of ideals indexed by $\{R'\}$; direct limits will always be taken with respect to the direct system $\{R'\}$.

\begin{definition} A \textbf{tree} of $R'$-algebras is a direct system $\{S'\}$ of rings, indexed by the directed set $\{R'\}$, where $S'$ is an $R'$-algebra. Note that the tree maps are not required to be  injective or birational. A morphism $\{S'\}\to\{T'\}$ of trees is the datum of a map of $R'$-algebras $S'\to T'$ for each $R'$ commuting with the tree morphisms $S'\to S''$ and $T'\to T''$ for each map $R'\to R''$.
\end{definition}
\begin{definition} Let $\{S'\}$ be a tree of $R'$-algebras. For each $S'$, let $I'$ be an ideal of $S'$. We say that $\{I'\}$ is a {\bf tree of ideals} if for every arrow $b_{S'S''}\colon S'\rightarrow S''$ in our direct system, we have $b^{-1}_{S'S''}I''=I'$. We have the obvious notion of inclusion of trees of ideals. In particular, we may speak about chains of trees of ideals.
\end{definition}

\begin{examples} For each non-negative element $\beta\in\Gamma$, the valuation ideals $\P'_\beta\subset R'$ of value $\beta$ form a tree of ideals of $\{R'\}$. Similarly, the $i$-th prime valuation ideals $P'_i\subset R'$ form a tree. If $rk\ \nu=r$, the prime valuation ideals $P'_i$ give rise to a chain
\begin{equation}
(0)=P'_0\subsetneqq P'_1\subseteq\dots\subseteq P'_r=m'\label{eq:treechain'}
\end{equation}
of trees of prime ideals of $\{R'\}$.
\medskip

In \cite{HOST1} we proved that the implicit prime ideals $H'_i$ form a tree of ideals of $\widehat{R}$.
\end{examples}
We showed that specifying an extension $\widehat\nu_-$ of $\nu$ as above is equivalent to specifying a chain
\begin{equation}
\tilde H'_0\subset\tilde H'_1\subset\dots\subset\tilde H'_{2r}=m'\widehat R'\label{eq:chaintree}
\end{equation}
of trees of prime valuation ideals of $\widehat{R'}$ such that $H'_\ell\subset\tilde H'_\ell$ for all $\ell\in\{0,\dots,2r\}$, and valuations
$\widehat\nu_1,\widehat\nu_2,\dots,\widehat\nu_{2r}$, where $\widehat\nu_i$ is a valuation of the field $k_{\widehat\nu_{i-1}}$ (the residue field of the valuation ring $R_{\widehat\nu_{i-1}}$), arbitrary when $i$ is odd and satisfying certain conditions, coming from the valuation $\nu_{\frac i2}$, when $i$ is even.
\medskip

\noindent Recall that for each $\beta\in \Gamma$ one associates to $\beta$ the valuation ideals
$$
\begin{array}{lr}
{\cal P}_\beta (R)=&\{x\in R/\nu(x)\geq\beta\}\cr
{\cal P}^+_\beta (R)=&\{x\in R/\nu(x)>\beta\}
\end{array}
$$
of $R$ (with the convention that $\nu(0)=\infty$, an element larger than any element of $\Gamma$) and the graded ring
$$
\hbox{\rm gr}_\nu R=\bigoplus_{\beta\in\Gamma_+}\frac{{\cal P}_\beta (R)}{{\cal P}^+_\beta (R)}.
$$
Note that if the ring $R$ is noetherian, the semigroup $\Phi \subset \Gamma_{\geq 0}$ of values of $\nu$ on $R\setminus\{0\}$ is well ordered. If $R\to R'$ is a birational extension of local domains dominated by $R_\nu$, we will write ${\cal P}'_\beta$ for ${\cal P}_\beta(R')$.

\begin{notation} Let ${\cal T}=\{R'\}$, the tree defined above. For a ring $R'\in\cal T$, we shall denote by ${\cal T}(R')$ the subtree of $\cal T$ consisting of all the
$\nu$-extensions $R''$ of $R'$.
\end{notation}
Let
\begin{equation}
\Gamma\hookrightarrow\widehat\Gamma\label{eq:extGamma}
\end{equation}
be an extension of ordered groups of the same rank. Let
$$
(0)=\widehat\Delta_r\subsetneqq\widehat\Delta_{r-1}\subsetneqq\dots\subsetneqq\widehat\Delta_0=\widehat\Gamma
$$
be the isolated subgroups of $\widehat\Gamma$, so that the inclusion (\ref{eq:extGamma}) induces inclusions
\begin{eqnarray}
\Delta_\ell&\hookrightarrow&\widehat\Delta_\ell\quad\text{ and}\\
\frac{\Delta_\ell}{\Delta_{\ell+1}}&\hookrightarrow&\frac{\widehat\Delta_\ell}{\widehat\Delta_{\ell+1}}.
\end{eqnarray}

Let $G\hookrightarrow\widehat G$ be an extension of graded algebras without zero divisors, such that $G$ is graded by  $\Gamma_+$ and $\widehat G$ by
$\widehat\Gamma_+$. The graded algebra $G$ is endowed with a natural valuation with value group $\Gamma$ and similarly for $\widehat G$ and $\widehat\Gamma$. Both of these natural valuations will be denoted by $ord$.

\begin{definition}\label{scalewiseDef} We say that the extension $G\hookrightarrow\widehat G$ is \textbf{scalewise-birational} if $G_0=\widehat G_0$ and for all homogeneous elements $x\in\widehat G$ and $\ell\in\{1,\dots,r-1\}$ such that $ord\ x\in\widehat\Delta_\ell$ there exists $y\in G$ such that $ord\ y\in\Delta_\ell$ and $xy\in G$.
\end{definition}
Consider a tree of prime ideals $H'$ of $\widehat{R'}$ with $H'\cap R'=(0)$ and a valuation $\widehat\nu_-$, centered at $\lim\limits_\to\frac{\widehat{R'}}{H'}$.
\begin{definition} We say that $\hat\nu_-$ is a \textbf{scalewise-birational extension of} $\nu$ if for each $R'$ the inclusion
\begin{equation}
{\rm gr}_\nu R'\subset {\rm gr}_{\hat\nu_-}\widehat{R'}/H'\label{eq:inclusionR'}
\end{equation}
is scalewise-birational.
\end{definition}
Of course, if  $\hat\nu_-$ is a scalewise-birational extension of $\nu$ then the inclusion \eqref{eq:inclusionR'} is birational in the usual sense (that is, induces equality of fraction fields) which, in turn, implies that $\widehat\Gamma=\Gamma$.\par
\medskip

The main result of the present paper is:\par
\begin{theorem}\label{teissier} There exists a tree of prime ideals $\tilde H'_0$ of $\widehat{R'}$ with $\tilde H'_0\cap R'=(0)$ and a scalewise-birational extension
$\widehat\nu_-$, centered at $\lim\limits_\to\frac{\widehat{R'}}{\tilde H'_0}$.
\end{theorem}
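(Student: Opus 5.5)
We plan to argue by induction on the rank $r$ of $\nu$, building the chain of trees of primes (\ref{eq:chaintree}) from the implicit prime ideals of \cite{HOST1} and choosing the valuations $\widehat\nu_i$ so that no new values appear. The first reduction is to the rank one case for each composite $\nu_\ell$. At level $\ell$ we work in $\lim\limits_\to\frac{\widehat{R'}}{H'_{2\ell-2}}$ localized at the tree $H'_{2\ell-1}$. There we set $\widehat\nu_{2\ell-1}$ to be the trivial extension, taking $\tilde H'_{2\ell-1}=H'_{2\ell-1}$, so that the odd steps contribute nothing to the value group. The task is then to extend $\nu_\ell$ to the quotient by a tree of primes $\tilde H'_{2\ell-2}\supset H'_{2\ell-2}$ whose contraction to $R'$ is $P'_{\ell-1}$.

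Second, in the rank one case (with $R$ excellent, so $R\to\widehat R$ is regular) the key point is the following. By \cite{HOST1}, $H'_0$ is the set of elements whose values under truncated approximations go to infinity; equivalently $H_0=\bigcap_\beta \P_\beta\widehat R$, and similarly in each $\widehat{R'}$. Assuming $\Phi$ is nonarchimedean-small, the graded ring $\gr_\nu R'$ maps onto the graded ring of the filtration $\{\P'_\beta\widehat{R'}\}$, because $\P'_\beta\widehat{R'}/\P'^+_\beta\widehat{R'}\cong \P'_\beta/\P'^+_\beta$ by flatness and the fact that these quotients have finite length. Define $\widehat\nu_-(x)=\max\{\beta : x\in\P'_\beta\widehat{R'}+H'_0\}$. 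The main obstacle is to show that, after replacing $R$ by a suitable $R'$ in the tree and possibly enlarging $H'_0$ to a prime $\tilde H'_0$, this function is a valuation. That means showing that the associated graded ring of $\widehat{R'}/\tilde H'_0$ under this filtration is a domain equal to $\gr_\nu R'$, so that $\widehat\Gamma=\Gamma$ and the extension is birational degree by degree.

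To attack this we intend to use regularity of the fibers together with a local blow-up argument. Take any minimal prime $\tilde H_0$ of $H_0$; we would show that $H_0$ is already prime after suitable blow-ups, using the tree property $H'_0\cap\widehat R=H_0$ and the fact that $\widehat R/H_0$ is analytically irreducible along $\nu$. We then check multiplicativity: if $x\in\P'_\beta\widehat{R'}$ and $y\in\P'_\gamma\widehat{R'}$ have exact values, then $xy$ has value exactly $\beta+\gamma$. For this, approximate $x,y$ modulo $\P'^+\widehat{R'}$ by elements of $R'$, whose images in $\gr_\nu R'$ multiply to nonzero elements since $\gr_\nu R'$ is a domain. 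This shows that $\gr_{\widehat\nu_-}(\widehat{R'}/\tilde H'_0)=\gr_\nu R'$, which is stronger than birationality.

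For higher rank, we glue using composition of valuations: $\widehat\nu_-=\widehat\nu_1\circ\dots\circ\widehat\nu_{2r}$. Scalewise-birationality is then checked isolated subgroup by isolated subgroup. Given homogeneous $x$ with $ord\,x\in\widehat\Delta_\ell$, its image lives in the graded ring of the level $\ell$ residual extension. By the inductive rank one equality of graded rings at each level, we find $y\in\gr_\nu R'$ with value in $\Delta_\ell$ and $xy\in\gr_\nu R'$, the denominators arising from lifting residue classes of $\kappa(P'_\ell)$ versus $\kappa(H'_{2\ell})$; these fields agree up to the regular, hence separable, extension, which is trivial at the level of graded pieces. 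Compatibility with the tree maps follows from the tree property of the $H'_i$ and of the $\P'_\beta$.
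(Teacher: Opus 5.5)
Your rank-one argument is sound. Because $\P'_\beta/{\P'}^+_\beta$ has finite length and $R'\to\widehat{R'}$ is faithfully flat, the order function of $\{\P'_\beta\widehat{R'}\}$ is a valuation on $\widehat{R'}/\bigcap_\beta\P'_\beta\widehat{R'}$ with graded algebra $\gr_\nu R'$, compatibly with the tree. (This intersection is the paper's $H'_1$, not $H'_0$.) But this only covers the top level $\ell=r$, where the relevant ring $\widehat{R'}/P'_{r-1}\widehat{R'}$ is the completion of $R'/P'_{r-1}$.

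For $\ell<r$, $\nu_\ell$ lives on $R'_{P'_\ell}/P'_{\ell-1}$ but must be extended to $\widehat{R'}_{\tilde H'_{2\ell}}/\tilde H'_{2\ell-1}$. That ring is not the completion of $R'_{P'_\ell}/P'_{\ell-1}$. Moreover, $\tilde H'_{2\ell}=\tilde H'_{2\ell+1}\supseteq H'_{2\ell+1}$ is in general strictly larger than the minimal prime $H'_{2\ell}$ of $P'_\ell\widehat{R'}$. So finite length no longer protects the graded pieces. The degree-zero part of the extended filtration is $\widehat{R'}_{\tilde H'_{2\ell}}/(P'_\ell\widehat{R'}_{\tilde H'_{2\ell}}+\tilde H'_{2\ell-1})$, which need not be a field, and then the order function is not a valuation centered at the required local ring.

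Here is an example. Let $R=k[x,y,z]_{(x,y,z)}$, $\nu_1=\mathrm{ord}_z$, and $\nu_2(f)=\mathrm{ord}_t f(t,\phi(t))$ on $k(x,y)$, with $\phi\in tk[[t]]$ transcendental over $k(t)$. Then $H_1=(0)$, $H_2=z\widehat R$ and $H_3=(z,y-\phi(x))$. Your recipe (implicit primes, trivial odd steps) gives $\tilde H_2=\tilde H_3=H_3$ and $\tilde H_1=(0)$. The filtration $z^n\widehat R_{H_3}$ then has order function $\mathrm{ord}_z$, which gives value $0$ to $y-\phi(x)$, an element of the maximal ideal. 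For the order function to work you must enlarge $\tilde H_1$ to a height-one prime inside $H_3$, e.g. $(y-\phi(x))$, modulo which $\widehat R_{H_3}$ is a DVR with uniformizer $z$. Your proposal has no mechanism for choosing such an enlargement.

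The paper builds the chain by descending recursion on $\ell$. It takes $\{\tilde H'_{2\ell-1}\}$ maximal (Zorn's lemma) among trees of primes contained in the already constructed $\tilde H'_{2\ell}$ and lying over $P'_{\ell-1}$. It uses maximality to show that nonzero elements of $\widehat{R'}_{\tilde H'_{2\ell}}/\tilde H'_{2\ell-1}$ divide nonzero elements of $R'_{P'_\ell}/P'_{\ell-1}$ (Lemma \ref{crucial}), and deduces that the filtration order is a valuation. Be aware that this divisibility can only be expected after passing far enough out in $\mathcal T$, not for a fixed $R'$ as literally stated. For the monomial valuation $\nu(u)=1$, $\nu(v)=\sqrt2$ on $A=k[u,v]_{(u,v)}$, the only such tree is $(0)$, since strict transforms of formal curves eventually leave the center of $\nu$. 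Yet $(v-\phi(u))\widehat A\cap A=(0)$.

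Separately, your claim that $\kappa(P'_\ell)$ and $\kappa(H'_{2\ell})$ agree up to an extension that is trivial on graded pieces is false. In the example above, $\kappa(P_1)=k(x,y)$ embeds in $\kappa(\tilde H_2)=k((x))$ via $y\mapsto\phi(x)$, an extension of infinite transcendence degree. In the paper the level-$\ell$ graded algebra is $\gr_{\nu_\ell}R_{\nu_\ell}\otimes_{\kappa(\mathbf m_\ell)}\kappa(\mathbf H_{2\ell})$ (Corollary \ref{integraldomain}), not $\gr_{\nu_\ell}R_{\nu_\ell}$. So there is no ``rank one equality of graded rings at each level''.

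Scalewise birationality is recovered only through the induction hypothesis at level $\ell+1$. Write $\bar x=\bar y\bar u^{-1}$ with $y\in\mathbf P_{\bar\beta}$ and $u=w/s$, where $w,s\notin\tilde H_{2\ell}$. The initial forms $\bar w,\bar s$ lie in $\gr_{\widehat\mu_{2\ell+2}}(\widehat{R'}/\tilde H'_{2\ell})$, which is scalewise-birational over $\gr_{\mu_{\ell+1}}(R'/P'_\ell)$. This is what clears the denominators, and it is a second reason the recursion has to run downward from $\ell=r$.
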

The results of \cite{Cut} and the example given in Remark 5.20, 4) of \cite{Te} shows that the morphism of associated graded rings is not an isomorphism in general.
\medskip

\noindent{\bf Acknowledgement.} We acknowledge earlier work in this area by W. Heinzer and J. Sally \cite{HeSa} (which deals with the case $\dim\ R=2$) as well as by the school of S. D. Cutkosky \cite{CE,CG} that helped inspire some of the authors to think about the subject.

\section{A review of some of the main definitions and results from [7]}

\subsection{Definition and first properties of implicit prime ideals}\label{definition}

Let $0\le\ell\le r$. We recall the definition of one of the main objects in the theory, the \textit{implicit prime ideals} of $\nu$ in $\widehat R$. Broadly speaking, the odd-numbered implicit prime ideals represent the fact that elements of $\widehat R$ can be limits of sequences of elements of $R$ of ever increasing valuation, tending to infinity at least for one of the valuations with which $\nu$ is composed, while even-numbered implicit ideals represent the fact that while some quotients $\frac{R}{P_\ell}$ may not be analytically irreducible, the valuation $\nu$ ``chooses" one of the irreducible components of the completion. These representations must then be ``stabilized'' with respect to the tree
${\mathcal T}$ in the way shown by equation (\ref{eq:defin}) below.\par
The $(2\ell+1)$-st {\bf implicit prime ideal} $H_{2\ell+1}\subset \widehat{R}$ is defined by
\begin{equation}
H_{2\ell+1}=\bigcap\limits_{\beta\in\Delta_\ell}\left(\left(\lim\limits_{\overset\longrightarrow{R'}}{\cal P}'_\beta\widehat{R'}\right)\bigcap\widehat{R}\right),\label{eq:defin}
\end{equation}
where $R'$ ranges over $\mathcal{T}$. As usual, we think of (\ref{eq:defin}) as a tree equation: if we replace $R$ by any other $R''\in{\cal T}$ in (\ref{eq:defin}), it defines the corresponding ideal $H''_{2\ell +1}\subset\widehat{R''}$.
\begin{proposition}\label{contracts} (\cite{HOST1}, Proposition 3.1) We have $H'_{2\ell+1}\cap R'=P'_\ell$.
\end{proposition}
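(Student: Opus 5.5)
The plan is to prove the statement for $R$ itself. Since (\ref{eq:defin}) is a tree equation, the same argument applied to any $R'\in\mathcal T$ in place of $R$ gives $H'_{2\ell+1}\cap R'=P'_\ell$.

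First I fix how to read (\ref{eq:defin}). The direct limit $\lim\limits_{\longrightarrow}{\cal P}'_\beta\widehat{R'}$, intersected with $\widehat R$, is the set of $z\in\widehat R$ whose image in $\widehat{R'}$ lies in ${\cal P}'_\beta\widehat{R'}$ for some $R'\in\mathcal T$. Equivalently, it is the union over $R'$ of the contractions $({\cal P}'_\beta\widehat{R'})\cap\widehat R$. I take $\beta$ to range over the non-negative elements of $\Delta_\ell$; the case $\ell=r$, where $\Delta_r=(0)$, is read with the convention giving $H_{2r+1}=m\widehat R$. The one algebraic input is faithful flatness of $R'\to\widehat{R'}$, which gives $I\widehat{R'}\cap R'=I$ for every ideal $I$ of $R'$.

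\emph{Inclusion $P_\ell\subset H_{2\ell+1}\cap R$.} Recall that $P_\ell$ consists of the $x\in R$ with $\nu(x)\notin\Delta_\ell$. Since $\Delta_\ell$ is an isolated subgroup and $\nu(x)\ge0$, this means $\nu(x)>\beta$ for every $\beta\in\Delta_\ell$. Hence for every $\beta\in\Delta_\ell$ and every $R'$ we have $x\in{\cal P}'_\beta$, so $x\in{\cal P}'_\beta\widehat{R'}\cap\widehat R$. Taking the union over $R'$ and then the intersection over $\beta$ gives $x\in H_{2\ell+1}$.

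\emph{Inclusion $H_{2\ell+1}\cap R\subset P_\ell$.} Let $x\in R\setminus P_\ell$, so $\alpha:=\nu(x)\in\Delta_\ell$. Pick $\beta\in\Delta_\ell$ with $\beta>\alpha$. For $\ell<r$ take $\beta=2\alpha$ if $\alpha>0$, or any positive element of $\Delta_\ell$ if $\alpha=0$. For $\ell=r$ the element $x$ is a unit and we use the convention directly.
\begin{enumerate}
\item For every $R'\in\mathcal T$ we have $x\in R\subset R'$ and $\nu(x)<\beta$, so $x\notin{\cal P}'_\beta$.
\item By faithful flatness, ${\cal P}'_\beta\widehat{R'}\cap R'={\cal P}'_\beta$, so $x\notin{\cal P}'_\beta\widehat{R'}$.
\item Since this holds for every $R'$, $x$ does not lie in the union $\left(\lim\limits_{\longrightarrow}{\cal P}'_\beta\widehat{R'}\right)\cap\widehat R$.
\end{enumerate}
Therefore $x\notin H_{2\ell+1}$.

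The only delicate step is the middle one: it requires that $x$ stay outside ${\cal P}'_\beta\widehat{R'}$ uniformly over the whole tree, and not merely in $\widehat R$. This is handled by applying faithful flatness separately in each $\widehat{R'}$ rather than in $\widehat R$, together with the compatibility of the maps $\widehat R\to\widehat{R'}$ with $R\to R'$. Everything else is bookkeeping with the definition of the isolated subgroups $\Delta_\ell$.
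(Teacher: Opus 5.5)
Your proof is correct and is essentially the argument behind the cited result (\cite{HOST1}, Proposition 3.1), which this paper only quotes. Faithful flatness of $R'\to\widehat{R'}$ gives $\mathcal{P}'_\beta\widehat{R'}\cap R'=\mathcal{P}'_\beta$ in each member of the tree, compatibility of the maps passes this to the direct limit, and intersecting over $\beta\in\Delta_\ell$ gives $P_\ell$; treating $\ell=r$ separately by the convention $H_{2r+1}=m\widehat R$ is right, since $P_\ell=\bigcap_{\beta\in\Delta_\ell}\mathcal{P}_\beta$ fails when $\Delta_r=(0)$.
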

\par\noindent The implicit ideal $H_{2\ell}$ is defined as the unique minimal prime ideal of $P_\ell\widehat R$ contained in $H_{2\ell+1}$.\par
Here the uniqueness follows from the fact that $R$ being excellent (or a $G$-ring), the ring $\widehat R\otimes_R\kappa(P_\ell)$ is regular, hence a domain. Thus the only minimal prime of $\widehat R\otimes_R\kappa(P_\ell)$ is $(0)$ and hence the unique minimal prime of $P_\ell\widehat R$ in $\widehat R$ is the kernel of the natural map
$\widehat R\rightarrow\widehat R\otimes_R\kappa(P_\ell)$.
\par\medskip

In (\cite{HOST1}, Theorem 8.1) we prove the primality of the implicit ideals.

\subsection{A classification of extensions of $\nu$ to $\widehat R$.}
\label{Rdag}

One is naturally led to consider the more general problem of extending $\nu$ not only to rings of the form $\frac{\widehat{R}}P$ but also to the ring
$\lim\limits_{\overset\longrightarrow{R'}}\frac{\widehat{R'}}{P'}$, where $P'$ is a tree of prime ideals of $\widehat{R'}$, such that $P'\cap R'=(0)$.

In \S5 of \cite{HOST1}, we gave a systematic description of all the possible extensions $\widehat{\nu}_-$ of $\nu$ to
$\lim\limits_{\overset\longrightarrow{R'}}\frac{\widehat{R'}}{P'\widehat{R'}}$, as compositions of $2r$ valuations,
\begin{equation}
\widehat{\nu}_-=\widehat{\nu}_1\circ\dots\circ\widehat{\nu}_{2r},\label{eq:composition}
\end{equation}
satisfying certain conditions. We associate to each extension $\widehat{\nu}_-$ of $\nu$  a chain
\begin{equation}
\tilde H'_0\subset\tilde H'_1\subset\dots\subset\tilde H'_{2r}=m'\widehat{R'}\label{eq:chaintree''}
\end{equation}
of prime $\widehat{\nu}_-$-ideals, corresponding to the decomposition (\ref{eq:composition}), satisfying
\begin{equation}
\tilde H'_{2\ell}\cap R'=\tilde H'_{2\ell+1}\cap R'=P'_\ell.\label{eq:tildeHcapR}
\end{equation}.

By definitions, for $1\le i\le2r$, $\widehat{\nu}_i$ is a valuation of the field $k_{\widehat{\nu}_{i-1}}$.
\begin{proposition}\label{Hintilde} (\cite{HOST1}, Proposition 5.3) We have
\begin{equation}
H'_i\subset\tilde H'_i\qquad\text{ for all }i\in\{0,\dots,2r\}.\label{eq:Hintilde}
\end{equation}
\end{proposition}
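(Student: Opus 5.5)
We prove the inclusion $H'_i\subset\tilde H'_i$ separately for odd and even indices, working at a fixed $R'\in\mathcal T$. This suffices because everything involved is a tree equation and can be transported to any other member of the tree. Throughout we use two facts. First, $\widehat\nu_-$ is centered at $\lim\limits_\to\frac{\widehat{R'}}{P'}$, so it is non-negative on (the image of) $\widehat{R'}$. Second, $\widehat\nu_-$ restricts to $\nu$ on $R'$.

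\emph{Odd indices.} Take $x\in H'_{2\ell+1}$. By \eqref{eq:defin}, for every $\beta\in\Delta_\ell$ there is a $\nu$-extension $R''\in\mathcal T(R')$ with $x\in{\cal P}''_\beta\widehat{R''}$. Then we can write $x=\sum_j a_jy_j$ with $a_j\in{\cal P}''_\beta$ and $y_j\in\widehat{R''}$. Since $\widehat\nu_-(y_j)\ge0$ and $\widehat\nu_-(a_j)=\nu(a_j)\ge\beta$, we get $\widehat\nu_-(x)\ge\beta$. As $\beta$ is arbitrary, $\widehat\nu_-(x)$ exceeds every element of $\Delta_\ell$; note also that $\widehat\nu_-(x)=\infty$ is allowed here. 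Now $\tilde H'_{2\ell+1}$ is, by construction in the decomposition \eqref{eq:composition}, the prime $\widehat\nu_-$-ideal of elements whose value lies beyond the isolated subgroup of the value group of $\widehat\nu_-$ attached to the index $2\ell+1$. This is the isolated subgroup that contains $\Delta_\ell$ as a cofinal subset, which is exactly what forces $\tilde H'_{2\ell+1}\cap R'=P'_\ell$ in \eqref{eq:tildeHcapR}. Hence $x\in\tilde H'_{2\ell+1}$.

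\emph{This is the step I expect to need the most care.} One must check, from the way the chain \eqref{eq:chaintree''} is built out of the composition \eqref{eq:composition}, that being larger than all of $\Delta_\ell$ really characterizes membership in $\tilde H'_{2\ell+1}$. Concretely, one must show that the isolated subgroup for the index $2\ell+1$ does not contain elements exceeding all of $\Delta_\ell$. I would verify this by passing to the residual valuation $\widehat\nu_{2\ell+1}\circ\dots\circ\widehat\nu_{2r}$ and using the cofinality just described.

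\emph{Even indices.} By \eqref{eq:tildeHcapR}, $\tilde H'_{2\ell}$ is a prime of $\widehat{R'}$ with $\tilde H'_{2\ell}\cap R'=P'_\ell$, so it contains $P'_\ell\widehat{R'}$ and hence some minimal prime $Q$ of $P'_\ell\widehat{R'}$. Since $R'\to\widehat{R'}$ is flat, going down shows that every minimal prime of $P'_\ell\widehat{R'}$ contracts to $P'_\ell$. Such primes therefore correspond to minimal primes of the fibre $\widehat{R'}\otimes_{R'}\kappa(P'_\ell)$. This fibre is regular, hence a domain, by the G-ring hypothesis, exactly as recalled after Proposition \ref{contracts}. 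So $P'_\ell\widehat{R'}$ has a unique minimal prime, namely the kernel of $\widehat{R'}\to\widehat{R'}\otimes_{R'}\kappa(P'_\ell)$. That prime is $H'_{2\ell}$, so $Q=H'_{2\ell}$ and $H'_{2\ell}\subset\tilde H'_{2\ell}$.

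Finally, $i=2r$ is trivial, since $H'_{2r}=m'\widehat{R'}=\tilde H'_{2r}$. The case $i=0$ is the even case with $\ell=0$.
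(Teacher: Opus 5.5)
The overall plan is right: prove the odd indices by a valuation estimate, then the even indices via minimal primes of $P'_\ell\widehat{R'}$. The even case, however, has a genuine gap, and the odd case needs one correction.

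\textbf{Odd indices.} Your argument is essentially right, but the justification for the step you flagged is not. Cofinality of $\Delta_\ell$ in the isolated subgroup attached to $2\ell+1$ is not what forces $\tilde H'_{2\ell+1}\cap R'=P'_\ell$. The ideal $\tilde H'_{2\ell}\subset\tilde H'_{2\ell+1}$ has the same contraction, yet $\Delta_\ell$ need not be cofinal in its (possibly larger) isolated subgroup. Cofinality holds because the subgroup attached to $2\ell+1$ is the smallest isolated subgroup containing $\Delta_\ell$. Equivalently, it holds because $\widehat\nu_{2\ell+2}$ has rank one and extends $\nu_{\ell+1}$. So the residual valuation to use is $\widehat\nu_{2\ell+2}\circ\dots\circ\widehat\nu_{2r}$ on $\widehat{R'}/\tilde H'_{2\ell+1}$, not $\widehat\nu_{2\ell+1}\circ\dots\circ\widehat\nu_{2r}$.

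\textbf{Even indices: the gap.} A regular ring need not be a domain; only its local rings are. The fibre $\widehat{R'}\otimes_{R'}\kappa(P'_\ell)$ is regular, but $P'_\ell\widehat{R'}$ can have several minimal primes, all lying over $P'_\ell$. For example, take the node $R=k[x,y]_{(x,y)}/(y^2-x^2-x^3)$ with the characteristic of $k$ not $2$, and $\ell=0$. The zero ideal then has two minimal primes in $\widehat R$, one for each analytic branch. The remark after Proposition \ref{contracts} is too strong as worded; the definition itself only claims uniqueness of a minimal prime of $P_\ell\widehat R$ \emph{contained in} $H_{2\ell+1}$.

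Your argument therefore proves too much. It uses nothing about $\tilde H'_{2\ell}$ beyond its contraction, so it would show that every prime of $\widehat{R'}$ lying over $P'_\ell$ contains $H'_{2\ell}$. That fails for the minimal prime of the branch not chosen by $\nu$.

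\textbf{The missing step} goes through the odd case. You have $Q\subset\tilde H'_{2\ell}\subset\tilde H'_{2\ell+1}$ and $H'_{2\ell}\subset H'_{2\ell+1}\subset\tilde H'_{2\ell+1}$. Since $\tilde H'_{2\ell+1}\cap R'=P'_\ell$, the ring $\widehat{R'}_{\tilde H'_{2\ell+1}}/P'_\ell\widehat{R'}_{\tilde H'_{2\ell+1}}$ is a localization of the formal fibre over $P'_\ell$. It is therefore regular local, hence a domain. So only one minimal prime of $P'_\ell\widehat{R'}$ lies in $\tilde H'_{2\ell+1}$, which forces $Q=H'_{2\ell}$. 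The even case thus depends on the odd inclusion $H'_{2\ell+1}\subset\tilde H'_{2\ell+1}$; that inclusion is what makes $\widehat\nu_-$ pick the correct branch.
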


One of the main theorems of \cite{HOST1} (Theorem 5.4) says that specifying the valuation $\widehat{\nu}_-$ is equivalent to specifying the following data:

(1) A chain of trees (\ref{eq:chaintree''}) of prime ideals of $\widehat{R'}$ satisfying \eqref{eq:Hintilde} and some additional conditions.

(2) For each $i$, $1\le i\le 2r$, a valuation $\widehat{\nu}_i$ of $k_{\widehat{\nu}_{i-1}}$ (where $\widehat{\nu}_0$ is taken to be the trivial valuation by convention), whose restriction to $\lim\limits_{\overset\longrightarrow{R'}}\kappa(\tilde H'_{i-1})$ is centered at the local ring $\lim\limits_{\overset\longrightarrow{R'}}\frac{\widehat{R'}_{\tilde H'_i}}{\tilde H'_{i-1}\widehat{R'}_{\tilde H'_i}}$.

The data $\left\{\widehat{\nu}_i\right\}_{1\le i\le 2r}$ is subject to the following additional condition: if $i=2\ell$ is even then $rk\ \widehat{\nu}_i=1$ and $\widehat{\nu}_i$ is an extension of $\nu_i$ to $k_{\widehat{\nu}_{i-1}}$ (which is naturally an extension of $k_{\nu_{i-1}}$).

In particular, such extensions $\widehat{\nu}_-$ always exist, and usually there are plenty of them.

\subsection{Tight extensions and scalewise birationality}

In \S6 of \cite{HOST1} we define and study, among other things, a class of extensions $\widehat{\nu}_-$ which are particularly useful for the applications called the \textbf{tight} extensions (see \cite[Definition 6.1]{HOST1}). One of the properties of tight extensions is the fact that
\begin{equation}
\tilde H'_{2\ell}=\tilde H'_{2\ell+1}\quad\text{ for }1\le\ell\le r.\label{eq:odd=even}
\end{equation}
\begin{proposition}\label{tight=scalewise} (\cite{HOST1}, Proposition 6.7) The extension $\widehat{\nu}_-$ is tight if and only if for each $R'$ in our direct system the natural graded algebra extension $\gr_\nu R'\rightarrow\gr_{\widehat{\nu}_-}\widehat{R'}$ is scalewise-birational.
\end{proposition}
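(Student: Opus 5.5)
The plan is to prove both implications by induction on the rank $r$, working one level of the decomposition (\ref{eq:composition}) at a time. The tool is the dictionary between homogeneous elements of $\gr_{\widehat\nu_-}\widehat{R'}$ and the successive residue-field valuations $\widehat\nu_i$ of Theorem 5.4 of \cite{HOST1}. Since Definition 6.1 of \cite{HOST1} is not restated here, I take it in the following form, and the argument depends on this being accurate. Tightness means that (\ref{eq:odd=even}) holds, so that the odd-indexed $\widehat\nu_{2\ell+1}$ are trivial on the relevant residue fields. It also means that for each $\ell$ the rank one valuation $\widehat\nu_{2\ell}$ is tight over $\nu_\ell$: every element of $\lim\limits_\to\kappa(\tilde H'_{2\ell-1})$ can be multiplied by an element coming from $\lim\limits_\to\kappa(P'_{\ell-1})$ to become a unit of $R_{\widehat\nu_{2\ell}}$ whose residue lies in $k_{\nu_\ell}$.

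The first step is a translation lemma, proved from the definition of composite valuations. Take a homogeneous element $\bar x$ of $\gr_{\widehat\nu_-}\widehat{R'}$ with $ord\ \bar x\in\widehat\Delta_\ell$. Then $\bar x$ is the initial form of some $x\in\widehat{R'}\setminus\tilde H'_{2\ell}$, and by (\ref{eq:tildeHcapR}) together with Proposition \ref{Hintilde}, the part of $\gr_{\widehat\nu_-}\widehat{R'}$ in degrees in $\widehat\Delta_\ell$ is the graded algebra of the residual valuation $\widehat\mu_{2\ell+1}$ on $\widehat{R'}/\tilde H'_{2\ell}$. The same holds for $\gr_\nu R'$, with $\mu_{\ell+1}$ on $R'/P'_\ell$. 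With this in hand, the scalewise condition at level $\ell$ becomes a birationality statement about $\widehat\mu_{2\ell+1}$ over $\mu_{\ell+1}$. The condition $G_0=\widehat G_0$ becomes the equality of residue fields $k_{\widehat\nu_-}=k_\nu$.

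For the direction (tight $\Rightarrow$ scalewise-birational), I would descend from $\ell=0$ to $\ell=r-1$. Let $\bar x$ be homogeneous with $ord\ \bar x\in\widehat\Delta_\ell\setminus\widehat\Delta_{\ell+1}$. Since $\widehat\nu_{2\ell+1}$ is trivial, the image of $\bar x$ in $\widehat\Delta_\ell/\widehat\Delta_{\ell+1}$ is a value of $\widehat\nu_{2\ell+2}$. Tightness of $\widehat\nu_{2\ell+2}$ then gives, after passing to some $R''\in\mathcal T(R')$ and clearing denominators, an element $y_1\in R'$ with $\nu(y_1)\in\Delta_\ell$ such that $\widehat\nu_-(xy_1)$ equals the value of an element of $R'$ modulo $\widehat\Delta_{\ell+1}$, with the same residue. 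Subtracting that element of $R'$ produces a remainder whose value lies deeper, in $\widehat\Delta_{\ell+1}$. Induction on the level, which terminates because the value semigroup of $R'$ is well ordered, then supplies the remaining factors, and multiplying them gives the required $y$ with $xy\in\gr_\nu R'$.

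For the converse (scalewise-birational $\Rightarrow$ tight), I would read the argument backwards. Suppose (\ref{eq:odd=even}) failed at some level $\ell$. Then $\widehat\nu_{2\ell+1}$ would be nontrivial, which yields $x\in\tilde H'_{2\ell+1}\setminus\tilde H'_{2\ell}$. Its initial form has order in $\widehat\Delta_\ell$, but that order is infinitely larger than every element of $\Delta_\ell/\Delta_{\ell+1}$. No $y$ with $ord\ y\in\Delta_\ell$ can then bring $xy$ into $\gr_\nu R'$, since the values of $\gr_\nu R'$ in that range are controlled by Proposition \ref{contracts}. The same kind of argument, applied to the residues of elements of value $0$ in $\widehat\nu_{2\ell}$, recovers the tightness of each even valuation.

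I expect the main obstacle to be the inductive step of the first direction. There one must show that the correction element can be taken in $R'$ itself, and not merely in some $R''\in\mathcal T(R')$, uniformly over the tree. My first attempt would be to use the tree property of the ideals $\tilde H'_i$ and of $\P'_\beta$, namely that $b^{-1}_{R'R''}$ preserves them. This would let me pull the relation $xy\in\gr_\nu R''$ back to $R'$ by multiplying by a suitable denominator of value in $\Delta_\ell$.
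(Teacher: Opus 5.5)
The paper gives no proof of this proposition; it is quoted from \cite{HOST1}. Your sketch therefore has to stand on its own, and its foundation fails: whatever the precise form of Definition~6.1 of \cite{HOST1}, it cannot be the condition you adopt for the even valuations. The valuation $\widehat\nu_{2\ell}$ is centered at a local ring with residue field $\kappa(\mathbf H_{2\ell})$, so its residue field always contains $\kappa(\mathbf H_{2\ell})$. For the extension built here, Corollary~\ref{integraldomain} shows that the degree-zero part of $\gr_{\widehat\nu_{2\ell}}$ is exactly $\kappa(\mathbf H_{2\ell})$, and this field is in general much larger than $k_{\nu_\ell}=\kappa(\mathbf m_\ell)$.

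Here is a concrete case. Take $R=k[x,y]_{(x,y)}$ with $\nu(x)=(1,0)$, $\nu(y)=(0,1)$ in $\mathbf Z^2$ ordered lexicographically. Every extension has $\tilde H_0=\tilde H_1=(0)$ and $\tilde H_2=\tilde H_3=x\widehat R$. So $\widehat\nu_2$ restricts to the $x$-adic valuation of the fraction field of $k[[x,y]]$, whose residue field is $k((y))$, while $k_{\nu_1}=k(y)$. Let $z\in k[[y]]$ be transcendental over $k(y)$. If $c\in K$ makes $cz$ a $\widehat\nu_2$-unit, then $\nu_1(c)=0$ and the residue of $cz$ is $\bar c z\notin k(y)$. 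So no extension of this $\nu$ satisfies your condition. Yet Theorem~\ref{teissier} provides one that is scalewise-birational for every $R'$; for $R$ itself one even has $\gr_\nu R=\gr_{\widehat\nu_-}\widehat R=k[X,Y]$.

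Consequently, the final step of your converse (``the same kind of argument \dots\ recovers the tightness of each even valuation'') would prove a false implication. Your forward direction, for its part, starts from a hypothesis that already fails in this example. Residues at level $\ell$ must be compared with $\kappa(\mathbf H_{2\ell})$, not with $k_{\nu_\ell}$. Two smaller points: the part of your converse forcing \eqref{eq:odd=even} is fine, but its input is \eqref{eq:tildeHcapR}, not Proposition~\ref{contracts}. And $G_0=\widehat G_0$ is automatic, since $\widehat{R'}/m'\widehat{R'}=R'/m'$; it is not a statement $k_{\widehat\nu_-}=k_\nu$.

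The forward direction has two further defects. First, your claim that subtracting $z\in R'$ leaves a remainder with value in $\widehat\Delta_{\ell+1}$ is false. The image of $\widehat\nu_-(xy_1-z)$ in $\widehat\Delta_\ell/\widehat\Delta_{\ell+1}$ is at least that of $\widehat\nu_-(xy_1)$, which is positive. More importantly, the scalewise condition is multiplicative and concerns only $\bar x$. If $xy_1/z$ is a $\widehat\nu_{2\ell+2}$-unit with residue $1$, then already $\overline{xy_1}=\bar z$ and you are done. If not, no additive approximation of $xy_1$ by elements of $R'$, terminating or not, yields a relation $\bar x\bar y\in\gr_\nu R'$. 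The finer components of $\bar x$ are encoded in the residue of $xy_1/z$. You must absorb them by writing that residue as a quotient of residues of elements of $\widehat{R'}\setminus\tilde H'_{2\ell+2}$, then invoking birationality one level down as the induction hypothesis. The induction therefore runs bottom-up; compare the use of the lower row of \eqref{eq:diagram} in Lemma~\ref{scalewise-birational}.

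Second, the obstacle you flag is real and your remedy fails, because the tree property $b^{-1}_{R'R''}I''=I'$ produces no denominators outside $P'_\ell$. Let $R'=k[x,y,z]_{(x,y,z)}$ and $\nu=\nu_1\circ\nu_2$. Here $\nu_1$ is the $x$-adic valuation of $k(x,y,w)$ with $w=z/x$, and $\nu_2$ is the $(y,w)$-adic order valuation of $k(y,w)$. Then $\nu(x)=(1,0)$, $\nu(y)=(0,1)$, $\nu(z)=(1,1)$ and $P'_1=(x,z)$. In $R''=R'[w]_{(x,y,w)}$ the element $w$ has value $(0,1)\in\Delta_1$, but $dw\in R'$ forces $x\mid d$, hence $d\in P'_1$. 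In fact the degree-$(0,m)$ part of $\gr_\nu R'$ is $k\bar y^m$, and the residue of $w/y$ is transcendental over $k$, so $\bar w\bar y^m\notin\gr_\nu R'$ for every $m$. Thus $\gr_\nu R'\subset\gr_\nu R''$ is birational but not scalewise-birational. A multiplier of order in $\Delta_\ell$ found over $R''$ therefore cannot in general be brought back to $R'$; the order constraint has to be obtained inside the fixed ring $R'$.
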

\begin{remark}\label{rephrasing} Proposition \ref{tight=scalewise} allows us to rephrase Conjecture 1 of \cite{HOST1} as follows: the valuation $\nu$ admits at least one tight extension $\widehat{\nu}_-$.
\end{remark}
The only material from \cite{HOST1} used in the proof of the main theorem of the present paper is the definition \eqref{eq:defin} of implicit ideals, their primality (Theorem 8.1) and the equality
$$
H'_{2\ell+1}\cap R'=P'_\ell
$$
(Proposition 3.1), but not the somewhat technical Theorem 5.4.

\section{A proof of the existence of a scalewise-birational extension $\hat\nu_-$}\label{locuni1}

The purpose of this section is to prove Theorem \ref{teissier} by constructing a valuation $\widehat\nu_-$ whose associated graded algebra is a scalewise-birational extension of
$\gr_\nu R$.

The construction consists of describing the trees of ideals $\tilde H'_i$, $0\le i\le 2r$ and, for each $i$, a valuations $\widehat\nu_i$ of the residue field $k_{\hat\nu_{i-1}}$, such that $\widehat\nu_-=\widehat\nu_1\circ\dots\widehat\nu_{2r}$. More precisely, for $\ell\in\{1,\dots,r\}$, we will construct, recursively in the descending order of $\ell$, a tree
$\tilde H'_{2\ell-1}$ of prime ideals of $\widehat{R'}$, $R'\in\mathcal{T}$, such that $\tilde H'_{2\ell-1}\cap R'=P'_{\ell-1}$, and a  scalewise-birational extension
$\widehat\mu_{2\ell}$ of $\mu_\ell$ to $\lim\limits_{\overset\longrightarrow{R'\in\mathcal{T}}}\frac{\widehat{R'}}{\tilde H'_{2\ell-1}}$. The valuation $\widehat\mu_2$ will be the desired scalewise-birational extension $\widehat\nu_-$ of $\mu_1=\nu$.

We will have $\tilde H'_{2\ell}=\tilde H'_{2\ell+1}$.

Let $\widehat{\mathbf R}:=\lim\limits_{\overset\longrightarrow{R'\in\mathcal{T}}}\widehat {R'}$ and $\mathbf H_i=\lim\limits_{\overset\longrightarrow{R'\in\mathcal{T}}}\tilde H'_i$. Then $\hat\nu_{2\ell+1}$ will be the trivial valuation of the field $\frac{\widehat{\bf R}_{\mathbf H_{2\ell+1}}}{\mathbf H_{2\ell}\widehat{\bf R}_{\mathbf H_{2\ell+1}}}$.
\bigskip

We define $\tilde H'_{2r}=\tilde H'_{2r+1}=m'\widehat{R'}$ and let $\widehat\mu_{2r+2}=\widehat\nu_{2r+2}$ be the trivial valuation of the residue field $k'$.

Next, assume that $\ell\in\{1,\dots,r\}$, that the tree $\left\{\tilde H'_{2\ell+1}\right\}\supset\left\{H'_{2\ell+1}\right\}$ of prime ideals of $\widehat{R'}$ and a
scalewise-birational extension $\widehat\mu_{2\ell+2}$ of $\mu_{\ell+1}$ to $\lim\limits_{\overset\longrightarrow{R'}}\frac{\widehat{R'}}{\tilde H'_{2\ell+1}}$ are already constructed for $R'\in\mathcal{T}$ and that
$$
\tilde H'_{2\ell+1}\cap R'=P'_\ell.
$$
Define $\tilde H'_{2\ell}:=\tilde H'_{2\ell+1}$ and let $\widehat\nu_{2\ell+1}$ be the trivial valuation  of the field $\frac{\widehat{\bf R}_{\mathbf H_{2\ell+1}}}{\mathbf
H_{2\ell}\widehat{\bf R}_{\mathbf H_{2\ell+1}}}$.

It remains to construct the ideals $\tilde H'_{2\ell-1}\subset\widehat{R'}$ and a scalewise-birational extension $\widehat\mu_{2\ell}$ of $\mu_\ell$ to
$\frac{\widehat{R'}}{\tilde H'_{2\ell-1}}$ for $R'$ in $\mathcal{T}$.

Consider the set $\mathcal I$ of all the trees of prime ideals $\{H'\}$ of $\left\{\widehat R'\right\}$ contained in $\tilde H'_{2\ell}$ and satisfying the condition
\begin{equation}
H'\cap R'=P'_{\ell-1}. \label{eq:restrictionmain0}
\end{equation}
The collection $\mathcal I$ is not empty since the tree $\{H'_{2\ell-1}\}$ belongs to it by Proposition \ref{contracts} (Proposition 3.1 of \cite{HOST1}). Let $\left\{\tilde
H'_{2\ell-1}\right\}$ be a maximal (in the sense of inclusion) element of $\mathcal I$. Here by ``maximal'' we mean that there is no tree $\{I'\}$ in $\mathcal I$ containing  $\left\{\tilde H'_{2\ell-1}\right\}$ such that $\left\{\tilde H'_{2\ell-1}\right\}\subsetneqq I'$ for some $I'$ in $\mathcal I$ and hence for all $I'$ that are sufficiently far out  in the tree. Such a maximal tree $\left\{\tilde H'_{2\ell-1}\right\}$ exists by Zorn's Lemma.

By the induction assumption, the valuation $\widehat\mu_{2\ell+2}$ is already defined on $\frac{\widehat{\bf R}}{\bf H_{2\ell+1}}$ and, for all $R'\in\mathcal{T}$, the inclusion
$\gr_{\mu_{\ell+1}}\frac{R'}{P'_\ell}\subset\gr_{\widehat\mu_{2\ell+2}}\frac{\widehat{R'}}{\tilde H'_{2\ell+1}}$ of graded algebras is scalewise-birational.

It remains to describe the valuation $\widehat\mu_{2\ell}$ centered in $\frac{\widehat{R'}}{\tilde H'_{2\ell-1}}$, $R'\in\mathcal T$. We will first define $\widehat\nu_{2\ell}$ and then define $\widehat\mu_{2\ell}$ as $\widehat\nu_{2\ell}\circ\widehat\mu_{2\ell+2}$.

\begin{lemma} Let $\mu$ be a rank one valuation, centered in a local noetherian domain $(A,M)$. Let
$$
\Psi=\mu(A\setminus\{0\}).
$$
Then $\Psi$ contains no infinite strictly increasing bounded sequences.
\end{lemma}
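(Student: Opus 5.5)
Since $\mu$ has rank one, I will embed its value group in $(\R,+)$ as an ordered subgroup and regard $\Psi$ as a subset of $\R_{\ge0}$. I argue by contradiction. Suppose $\beta_1<\beta_2<\cdots$ is a strictly increasing sequence in $\Psi$ bounded above by some $\beta\in\R$. Then it converges to a real limit $\beta^*=\sup_j\beta_j$, and all $\beta_j<\beta^*$. The aim is to turn this into an infinite strictly descending chain of ideals in a suitable artinian quotient of $A$, which is impossible.

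First I will show that the $M$-adic and valuation topologies are comparable in one direction. Because $\mu$ is centered in $A$, every element of $M$ has strictly positive value. $M$ is finitely generated, say by $x_1,\dots,x_n$, so $\delta:=\min_i\mu(x_i)>0$. Hence $\mu(M^t)\ge t\delta$ for every $t$. Choose $t$ with $t\delta>\beta^*$. Then $M^t\subset\mathcal P_{\beta^*}(A)$, where $\mathcal P_\gamma(A)=\{x\in A:\mu(x)\ge\gamma\}$. This is exactly where rank one is used: it makes $\delta$ archimedean relative to $\beta^*$, so that a finite multiple of $\delta$ exceeds $\beta^*$.

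Next, consider the valuation ideals $I_j:=\mathcal P_{\beta_j}(A)$. Since $\beta_j<\beta_{j+1}$, we have $I_j\supseteq I_{j+1}$. The inclusion is strict: pick $a_j\in A$ with $\mu(a_j)=\beta_j$. Then $a_j\in I_j$ but $a_j\notin I_{j+1}$.

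All of these ideals contain $M^t$, because $\beta_j<\beta^*<t\delta$. So their images in $A/M^t$ form an infinite strictly descending chain of ideals. But $A/M^t$ is a noetherian local ring of dimension zero, hence artinian, hence it has finite length. This contradiction proves that no such sequence exists.

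The only delicate step is the comparison $M^t\subset\mathcal P_{\beta^*}(A)$. It needs both that $M$ is finitely generated (noetherianity) and that the value group is archimedean (rank one). Once that is established, the rest is a routine length argument.
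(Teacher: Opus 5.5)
Your proof is correct and takes essentially the same approach as the paper. Both arguments use the finite generation of $M$ and the archimedean property of a rank one value group to get $M^n\subset\mathcal{P}_\beta(A)$ for an upper bound $\beta$ of the sequence, and then turn the sequence into an infinite strictly descending chain of valuation ideals in a finite-length quotient of $A$, which is impossible; the paper uses $A/\mathcal{P}_\beta(A)$ where you use $A/M^t$, and you bound by the real supremum rather than an element of $\Psi$, which are only cosmetic differences.
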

\begin{proof} For an element $\beta\in\Psi$, let $\P_\beta:=\P_\beta(A)$ denote the $\mu$-ideal of $A$ of value $\beta$. An infinite ascending sequence
$\alpha_1<\alpha_2<\dots$ in $\Psi$, bounded above by an element $\beta\in\Psi$, would give rise to an infinite descending chain of ideals in $\frac A{\P_\beta}$. Thus it is sufficient to prove that $\frac A{\P_\beta}$ has finite length.

Let $\delta:=\mu(M)(=\min(\Psi\setminus\{0\}))$. Since $\rk\ \mu=1$, there exists $n\in\N$ such that $\beta\le n\delta$. Then $M^n\subset\P_\beta$, so there is a surjective homomorphism $\frac A{M^n}\twoheadrightarrow\frac A{\P_\beta}$. Thus $\frac A{\P_\beta}$ has finite length, as desired.
\end{proof}

\begin{remark}See \cite[Theorem 3.2]{CT} for a more general result about the absence of accumulation points in semigroups.\end{remark}

\begin{corollary}\label{wellordered} \Mark The semigroup $\nu\left(\frac{R'_{P'_\ell}}{P'_{\ell-1}}\setminus\{0\}\right)$ is isomorphic to $\mathbf N$ as an ordered set.

\end{corollary}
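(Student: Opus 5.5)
We apply the preceding Lemma to $A:=\frac{R'_{P'_\ell}}{P'_{\ell-1}R'_{P'_\ell}}$ with $\mu:=\nu_\ell$, and then use that $\Psi$ sits inside the archimedean group $\frac{\Delta_{\ell-1}}{\Delta_\ell}$. The value set in the Corollary is read as the set of $\nu_\ell$-values, i.e.\ the images in $\frac{\Delta_{\ell-1}}{\Delta_\ell}$; this is the only sensible reading, since $\nu_\ell$ is the valuation induced on this ring.

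First we check the hypotheses of the Lemma. $A$ is a local noetherian domain, being a localization of the noetherian ring $R'$ modulo the prime $P'_{\ell-1}R'_{P'_\ell}$. By the description of $\nu$ as a composition $\nu_1\circ\dots\circ\nu_r$, the valuation $\nu_\ell$ of $\kappa(\mathbf m_{\ell-1})$ has rank one and value group $\frac{\Delta_{\ell-1}}{\Delta_\ell}$. Its valuation ring is $\frac{(R_\nu)_{\mathbf m_\ell}}{\mathbf m_{\ell-1}(R_\nu)_{\mathbf m_\ell}}$, whose maximal ideal contracts to $P'_\ell$ in $R'$ and to $P'_{\ell-1}$ at the lower level. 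Hence $\nu_\ell$ is nonnegative on $A$ and positive on its maximal ideal, so it is centered in $A$. The Lemma then shows that $\Psi=\nu_\ell(A\setminus\{0\})$ contains no infinite strictly increasing bounded sequence.

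Second, $\Psi$ is well ordered. A descending chain in $\Psi$ gives a descending chain of $\nu_\ell$-ideals, which cannot be infinite because $A$ is noetherian. Alternatively, we can invoke the earlier remark that values of a noetherian local domain form a well-ordered semigroup.

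Third, we construct the isomorphism with $\mathbf N$. Since $\Psi$ is well ordered, we enumerate it increasingly as $\psi_0=0<\psi_1<\dots$, where $\psi_{n+1}$ is the minimum of the elements exceeding $\psi_n$. This exists provided $\Psi$ is unbounded, which holds because $\Psi$ contains $\nu_\ell(x)>0$ and all of its multiples, and the value group is archimedean. It remains to show that every element is reached. For $\beta\in\Psi$, the elements below $\beta$ contain no infinite increasing sequence by the Lemma. Combined with well-ordering, this forces the initial segment $\{\psi\in\Psi:\psi<\beta\}$ to be finite: an infinite well-ordered set contains an infinite increasing sequence given by its first $\omega$ elements. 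Hence $\beta=\psi_n$ for some $n$, so the enumeration is an order isomorphism $\mathbf N\cong\Psi$.

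The main obstacle is the first step, namely checking that $\nu_\ell$ is genuinely a rank one valuation centered in $A$. This amounts to unwinding the identification of $\kappa(P'_{\ell-1})$ inside $\kappa(\mathbf m_{\ell-1})$ and the contraction of $\mathbf m_\ell$ to $P'_\ell$. The rest is formal.
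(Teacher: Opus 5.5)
Your argument is the paper's: apply the preceding Lemma to $A=\frac{R'_{P'_\ell}}{P'_{\ell-1}R'_{P'_\ell}}$ with $\mu=\nu_\ell$, and combine it with well-ordering. You also make explicit two points the paper leaves implicit: every initial segment of $\Psi$ is finite, and $\Psi$ is unbounded (without unboundedness one only gets a finite ordinal). There is, however, a slip in your second step. A strictly decreasing sequence $\beta_1>\beta_2>\cdots$ in $\Psi$ gives a strictly \emph{ascending} chain of ideals $\mathcal P_{\beta_1}\subsetneq\mathcal P_{\beta_2}\subsetneq\cdots$, and it is the ascending chain condition that rules it out. As written (a descending chain of ideals stopped by noetherianity), the justification is wrong, though it is trivially repaired.

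More substantively, your unboundedness step needs some $x\in A$ with $\nu_\ell(x)>0$, that is, $P'_{\ell-1}\subsetneq P'_\ell$. The paper explicitly allows $P'_{\ell-1}=P'_\ell$. For instance, take the monomial valuation on $k[u,v]_{(u,v)}$ with $\nu(u)=(1,0)$ and $\nu(v)=(1,1)$ in $\mathbf Z^2$ ordered lexicographically. Then $P_1=P_2=m$, and for $\ell=2$ the ring $A$ is the field $k$, so $\Psi=\{0\}$, which is not isomorphic to $\mathbf N$. So the statement, and the paper's proof, which is silent on this, must be read for $R'$ far enough out in $\mathcal T$. Choose $z\in\mathbf m_\ell\setminus\mathbf m_{\ell-1}$ and $R'$ containing $z$ (possible since $R_\nu=\bigcup R'$). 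Then $z\in P'_\ell\setminus P'_{\ell-1}$, and your argument goes through verbatim.
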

\begin{proof} Since $\nu_\ell$ is a rank 1 valuation, centered in the local noetherian domain $\frac{R'_{P'_\ell}}{P'_{\ell-1}}$, \Mark the semigroup 
$\nu\left(\frac{R'_{P'_\ell}}{P'_{\ell-1}}\setminus\{0\}\right)$ contains no infinite strictly increasing bounded sequences. The corollary now follows from the fact that
$\nu\left(\frac{R'_{P'_\ell}}{P'_{\ell-1}}\setminus\{0\}\right)$ is well ordered.
\end{proof}

\begin{lemma}\label{crucial} Fix a ring $R'$ in $\mathcal T$ and a non-zero element $x\in\frac{\widehat{R'}_{\tilde H'_{2\ell}}}{\tilde H'_{2\ell-1}\widehat{R'}_{\tilde H'_{2\ell}}}$. Then
\[
(x)\bigcap\frac{R'_{P'_\ell}}{P'_{\ell-1}}\ne(0).
\]
\end{lemma}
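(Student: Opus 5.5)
We argue by contradiction, using the maximality of the tree $\{\tilde H'_{2\ell-1}\}$ in $\mathcal I$. Write $A':=\frac{\widehat{R'}_{\tilde H'_{2\ell}}}{\tilde H'_{2\ell-1}\widehat{R'}_{\tilde H'_{2\ell}}}$ and $B':=\frac{R'_{P'_\ell}}{P'_{\ell-1}}$. Since $\tilde H'_{2\ell-1}\cap R'=P'_{\ell-1}$ and $\tilde H'_{2\ell}\cap R'=P'_\ell$, we have a natural injection $B'\hookrightarrow A'$. Suppose $x\ne0$ in $A'$ satisfies $(x)\cap B'=(0)$. Clearing denominators, we may take $x$ to be the image of an element of $\widehat{R'}$ not in $\tilde H'_{2\ell}$ times a unit, so $x\in\widehat{R'}\setminus\tilde H'_{2\ell-1}$ and $x\in\tilde H'_{2\ell}$; the case $x\notin\tilde H'_{2\ell}$ is trivial, since then $x$ is a unit and $(x)\cap B'=B'$.

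The key step is then to produce a prime strictly larger than $\tilde H'_{2\ell-1}$ that still contracts to $P'_{\ell-1}$. The multiplicative set $S=B'\setminus\{0\}$ does not meet $(x)$. By the standard prime avoidance argument (Zorn, or localizing at $S$), there is a prime $\mathfrak q$ of $A'$ containing $x$ with $\mathfrak q\cap S=\emptyset$, i.e. $\mathfrak q\cap B'=(0)$. Pull $\mathfrak q$ back to a prime $Q'\subset\tilde H'_{2\ell}$ of $\widehat{R'}$. Then $Q'\supsetneqq\tilde H'_{2\ell-1}$ because $x\in Q'$, and $Q'\cap R'=P'_{\ell-1}$.

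It remains to extend $Q'$ to a tree $\{Q''\}_{R''\in\mathcal T(R')}$ lying in $\mathcal I$ and containing $\{\tilde H'_{2\ell-1}\}$, which contradicts maximality. This is the main obstacle: the choice at $R'$ must be propagated compatibly to all $\nu$-extensions $R''$, with $Q''\cap\widehat{R'}=Q'$, $Q''\subset\tilde H''_{2\ell}$, $Q''\supset\tilde H''_{2\ell-1}$ and $Q''\cap R''=P''_{\ell-1}$. I would try to build the primes $Q''$ by a compactness argument over the directed system, using Zorn on compatible families. For each $R''$, the set of primes of $\widehat{R''}$ lying between $\tilde H''_{2\ell-1}$ and $\tilde H''_{2\ell}$, contracting to $Q'$ and avoiding $R''_{P''_\ell}\setminus P''_{\ell-1}$, is nonempty. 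Nonemptiness would be checked by applying the same localization argument in $\widehat{R''}$ to the multiplicative set generated by $\widehat{R'}\setminus Q'$ and $R''\setminus P''_{\ell-1}$. The point to verify is that this set meets neither $\tilde H''_{2\ell-1}$ nor the element structure forcing contraction. This uses $\widehat{R'}\to\widehat{R''}$ and the fact that $\tilde H''_{2\ell-1}$ already contracts correctly. These sets of primes are closed in the constructible topology and form an inverse system of nonempty compact spaces, so their inverse limit is nonempty. This gives the tree $\{Q''\}$ and the desired contradiction.

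A cleaner alternative, if it works, is to do the avoidance argument directly in $\widehat{\mathbf R}/\mathbf H_{2\ell-1}$ localized at $\mathbf H_{2\ell}$, with the multiplicative set $\lim_\to(R''_{P''_\ell}\setminus P''_{\ell-1})$. One then takes a prime of the limit containing $x$ and avoiding this set, and contracts it to each $\widehat{R''}$. Contractions of a single prime of the direct limit automatically form a tree, and each contraction meets $R''$ in exactly $P''_{\ell-1}$, contains $\tilde H''_{2\ell-1}$, and lies in $\tilde H''_{2\ell}$. I would pursue this version first.
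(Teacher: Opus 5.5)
Your strategy matches the paper's: contradict the maximality of $\{\tilde H'_{2\ell-1}\}$ in $\mathcal I$. Your single-level step is correct: localizing at $B'\setminus\{0\}$ gives a prime $Q'$ with $\tilde H'_{2\ell-1}\subsetneqq Q'\subset\tilde H'_{2\ell}$ and $Q'\cap R'=P'_{\ell-1}$, and this replaces the paper's reduction to minimal primes of $(x)$.

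The gap is the step you flag yourself, propagating $Q'$ to a tree, and it cannot be closed. In your first route, nonemptiness of the fibres over $Q'$ is unjustified. The map $\widehat{R'}\to\widehat{R''}$ is not flat, lying over can fail, and the multiplicative set generated by $\widehat{R'}\setminus Q'$ can meet $Q'\widehat{R''}$. In your second route, work in the limit ring $\mathbf A=\lim_{\to}A''$ with $\mathbf S=\bigcup_{R''}(B''\setminus\{0\})$. A prime of $\mathbf A$ containing $x$ and avoiding $\mathbf S$ exists only if $(x)\mathbf A\cap\mathbf S=\emptyset$. Your hypothesis, however, is only $(x)A'\cap B'=(0)$ at the single level $R'$. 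Nothing prevents $x$ from dividing a nonzero element of $B''$ in $A''$ for some later $R''$.

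This really happens. Let $R=k[X,Y]_{(X,Y)}$ and let $\nu$ be the monomial valuation with $\nu(X)=1$, $\nu(Y)=\sqrt2$, so $r=\ell=1$ and $\tilde H'_2=m'\widehat{R'}$.
On each quadratic transform $R_i$ along $\nu$ (a cofinal family in $\mathcal T$), $\nu$ is monomial in regular parameters $u_i,v_i$ with rationally independent values. Consequently every nonzero element of $\widehat{R_j}$ becomes a monomial in $u_i,v_i$ times a unit of $\widehat{R_i}$ for $i\gg j$. A tree in $\mathcal I$ with $H_j\ne(0)$ would therefore put $u_i$ or $v_i$ into $H_i\cap R_i=(0)$. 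So every tree in $\mathcal I$ vanishes on the $R_i$, and at $R'=R$ we get $A=\widehat R=k[[X,Y]]$ and $B=R$.

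Take $x=Y-f(X)$ with $f=\sum_{i\ge1}a_iX^i$ transcendental over $k(X)$ and $a_1\ne0$. Then $(x)\cap R=(0)$, since $g\in(x)$ forces $g(X,f(X))=0$. In $R_1$ (where $Y=XY_1$) we have $x=X\,(Y_1-f(X)/X)$, and the second factor is a unit of $k[[X,Y_1]]$. Hence every prime of $\widehat{R_1}$ containing $x$ contains $X$ and $Y$. So no prime lies over $Q'=(x)$, and your fibre is empty. Moreover $0\ne X\in(x)A_1\cap B_1$, so $(x)\mathbf A\cap\mathbf S\ne\emptyset$.

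Thus the statement, at a fixed level $R'$, is actually false. The paper's own proof has the same defect. It asserts, without proof, that for every $R''$ some minimal prime $Q''$ of $(x)$ in $A''$ lies over $Q'$ and satisfies $Q''\cap B''=(0)$; this fails here, since $(x)A_1=(X)$.

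What your limit argument does prove correctly is the limit version: for every nonzero $x\in A'$ there is $R''\in\mathcal T(R')$ with $(x)A''\cap B''\ne(0)$. Otherwise $(x)\mathbf A\cap\mathbf S=\emptyset$, and contracting a prime of $\mathbf A$ containing $x$ and avoiding $\mathbf S$ yields a tree in $\mathcal I$ strictly containing $\{\tilde H'_{2\ell-1}\}$. That is what maximality genuinely gives, and the later uses of the lemma would have to be carried out in the limit ring accordingly.
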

\begin{proof} Let $Q_1,\dots,Q_s$ be the set of minimal primes of  $(x)$ in $\frac{\widehat{R'}_{\tilde H'_{2\ell}}}{\tilde H'_{2\ell-1}\widehat{R'}_{\tilde H'_{2\ell}}}$. It is sufficient to prove that 
\begin{equation}
Q_i\cap\frac{R'_{P'_\ell}}{P'_{\ell-1}}\ne(0)\quad\text{for all }\quad i\in\{1,\dots,s\}.\label{eq:intnonzero}
\end{equation}
Indeed, once \eqref{eq:intnonzero} is established, let $y_i$ be a non-zero element of $Q_i\cap\frac{R'_{P'_\ell}}{P'_{\ell-1}}$. Then there exists $N\in\bf N$ such that
$0\ne\left(\prod\limits_{i=1}^sy_i\right)^N\in(x)\bigcap\frac{R'_{P'_\ell}}{P'_{\ell-1}}$, as desired.

To prove \eqref{eq:intnonzero}, we  argue by contradiction. Assume that $\left(x\right)$ has a minimal prime $Q'$ in $\frac{\widehat{R'}_{\tilde H'_{2\ell}}}{\tilde
H'_{2\ell-1}\widehat{R'}_{\tilde H'_{2\ell}}}$ satisfying
\[
Q'\cap\frac{R'_{P'_\ell}}{P'_{\ell-1}}=(0).
\]
For every $\nu$-extension $R'\to R''$, there exists a minimal prime $Q''$ of $\left(x\right)$ in $\frac{\widehat{R''}_{\tilde H''_{2\ell}}}{\tilde H''_{2\ell-1}\widehat{R''}_{\tilde H''_{2\ell}}}$ lying over $Q'$ and satisfying
\begin{equation}
Q''\cap\frac{R''_{P''_\ell}}{P''_{\ell-1}}=(0).\label{eq:intersectionzero}
\end{equation}
By Zorn's lemma, there exists a tree $\{P''\}$ of prime ideals of $\left\{\frac{\widehat{R''}_{\tilde H''_{2\ell}}}{\tilde H''_{2\ell-1}\widehat{R''}_{\tilde H''_{2\ell}}}\right\}$ satisfying \eqref{eq:intersectionzero}. Taking the preimage of $\{P''\}$ in $\left\{\widehat{R''}\right\}$ produces a tree $\{H''\}$ satisfying \eqref{eq:restrictionmain0} and properly containing the tree $\left\{\tilde H''_{2\ell-1}\right\}$, contradicting the maximality of $\left\{\tilde H''_{2\ell-1}\right\}$. This completes the proof of the lemma.
\end{proof}

For a positive element $\bar\beta\in\frac{\Delta_{\ell-1}}{\Delta_\ell}$ denote by $\widehat{\mathcal P}'_{\bar\beta ,\ell}$ the ideal
\begin{equation}
\widehat{\mathcal P}'_{\bar\beta ,\ell}=\frac{\left(\P'_{\bar\beta}+\tilde H'_{2\ell-1}\right)\widehat{R'}_{\tilde H'_{2\ell}}}{\tilde H'_{2\ell-1}\widehat{R'}_{\tilde H'_{2\ell}}}.\label{eq:validealmain}
\end{equation}

Since by the definition of $\tilde H'_{2\ell-1}$ we have $\bigcap_{\bar\beta}\widehat{\mathcal P}'_{\bar\beta ,\ell}=(0)$, we can define the pseudo-valuation
$\widehat\nu_{2\ell}$ centered at  $\frac{\widehat{R'}_{\tilde H'_{2\ell}}}{\tilde H'_{2\ell-1}\widehat{R'}_{\tilde H'_{2\ell}}}$ by
\begin{equation}\label{eq:mu2l}
\widehat\nu_{2\ell}(x):=\max\left\{\left.\bar\beta\in\left(\frac{\Delta_{\ell-1}}{\Delta_\ell}\right)_+\ \right|\ x\in\widehat{\mathcal P}'_{\bar\beta ,\ell}\right\}.
\end{equation}
\
\begin{lemma} We have
\begin{equation}
\widehat\P'_{\bar\beta}\bigcap\frac{R'_{P'_\ell}}{P'_{\ell-1}}=\P'_{\bar\beta}.\label{eq:Jnotempty}
\end{equation}
\end{lemma}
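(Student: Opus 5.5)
The inclusion $\supseteq$ is immediate, since $\P'_{\bar\beta}$ maps into $\widehat{\mathcal P}'_{\bar\beta,\ell}$ by the definition \eqref{eq:validealmain}. (Here $\P'_{\bar\beta}$ denotes the $\nu_\ell$-ideal of value $\bar\beta$ in $\frac{R'_{P'_\ell}}{P'_{\ell-1}}$.) For $\subseteq$ I argue by contradiction. I will blow up along $\nu$ so that a putative bad element divides the generators of $\P'_{\bar\beta}$ with a quotient lying in the prime $\tilde H''_{2\ell}$, and then use that the trees $\tilde H'_{2\ell-1}$, $\tilde H'_{2\ell}$ contract to $P''_{\ell-1}$, $P''_\ell$.

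\textbf{Reduction.} Take $y\in\frac{R'_{P'_\ell}}{P'_{\ell-1}}$ lying in $\widehat{\mathcal P}'_{\bar\beta,\ell}$ with $\nu_\ell(y)=\bar\gamma<\bar\beta$; in particular $y\ne0$. Elements of $R'\setminus P'_\ell$ map outside $\tilde H'_{2\ell}$, because $\tilde H'_{2\ell}\cap R'=P'_\ell$, and hence they are units in $\widehat{R'}_{\tilde H'_{2\ell}}$. So I may clear denominators and assume $y$ is the image of an element of $R'$, still denoted $y$. Choose generators $a_1,\dots,a_n$ of the ideal $\P'_{\bar\beta}\cap R'$ of $R'$. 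The hypothesis then reads
$$
y=\sum_i c_ia_i+h,\qquad c_i\in\widehat{R'}_{\tilde H'_{2\ell}},\quad h\in\tilde H'_{2\ell-1}\widehat{R'}_{\tilde H'_{2\ell}}.
$$

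\textbf{Blowing up.} Let $R''=R'[a_1/y,\dots,a_n/y]$, localized at the center of $\nu$. This is a $\nu$-extension because $\nu(a_i)\ge\nu(y)$. Moreover $\nu_\ell(a_i/y)\ge\bar\beta-\bar\gamma>0$, so $\nu(a_i/y)\notin\Delta_\ell$, which means $a_i/y\in P''_\ell\subset\tilde H''_{2\ell}$. Mapping the displayed relation into $\widehat{R''}_{\tilde H''_{2\ell}}$ uses the tree property: $\tilde H'_{2\ell-1}$ maps into $\tilde H''_{2\ell-1}$, and the localization is compatible because $\tilde H''_{2\ell}$ contracts to $\tilde H'_{2\ell}$. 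This gives $y(1-z)\in\tilde H''_{2\ell-1}\widehat{R''}_{\tilde H''_{2\ell}}$, where $z=\sum_ic_i(a_i/y)$ lies in the maximal ideal $\tilde H''_{2\ell}\widehat{R''}_{\tilde H''_{2\ell}}$. Hence $1-z$ is a unit and $y\in\tilde H''_{2\ell-1}\widehat{R''}_{\tilde H''_{2\ell}}$. Since $\tilde H''_{2\ell-1}\subset\tilde H''_{2\ell}$ is prime, its extension to the localization contracts back to $\tilde H''_{2\ell-1}$. Therefore $y\in\tilde H''_{2\ell-1}\cap R''=P''_{\ell-1}$, so $y\in P''_{\ell-1}\cap R'=P'_{\ell-1}$, contradicting $y\ne0$ in $\frac{R'_{P'_\ell}}{P'_{\ell-1}}$.

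\textbf{Main obstacle.} The delicate step is the compatibility of the tree maps $\widehat{R'}\to\widehat{R''}$ with the localizations at $\tilde H'_{2\ell}$ and $\tilde H''_{2\ell}$, together with the contraction $\tilde H''_{2\ell-1}\cap R''=P''_{\ell-1}$. The latter holds because the tree $\{\tilde H'_{2\ell-1}\}$ was chosen in $\mathcal I$, so \eqref{eq:restrictionmain0} applies. I expect both facts to follow from the tree axioms ($b^{-1}$ of the primes are the primes) and the construction of the trees.
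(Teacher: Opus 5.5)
Your proof is correct, and it takes a different route from the paper's. The paper proves $\subseteq$ by choosing, ``by Zorn's lemma'', a valuation $\tilde\nu$ that extends $\nu_\ell$ and is centered in $\frac{\widehat{R'}_{\tilde H'_{2\ell}}}{\tilde H'_{2\ell-1}\widehat{R'}_{\tilde H'_{2\ell}}}$. Nonnegativity of $\tilde\nu$ on this ring gives $\widehat{\mathcal P}'_{\bar\beta,\ell}\subset\tilde\P'_{\bar\beta}$, and intersecting with $\frac{R'_{P'_\ell}}{P'_{\ell-1}}$ gives $\P'_{\bar\beta}$.

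You avoid auxiliary valuations entirely. One local blowing up $R''=R'[a_1/y,\dots,a_n/y]$ moves the quotients $a_i/y$ into $P''_\ell\subset\tilde H''_{2\ell}$. The unit $1-z$ of the local ring $\widehat{R''}_{\tilde H''_{2\ell}}$ then forces $y\in\tilde H''_{2\ell-1}$, and the contractions give the contradiction.

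What each route buys:
\begin{itemize}
\item The paper's argument is shorter, and it produces $\tilde\nu$, which is reused immediately (Remark \ref{tildelessthanhat} and the proof that $\widehat\nu_{2\ell}$ is a valuation). With your route that valuation would still have to be built later.
\item Your argument is elementary and shows exactly where the tree structure enters. In the paper this input is hidden in the existence of $\tilde\nu$. An extension of $\nu_\ell$ centered in an arbitrary local domain dominating $\frac{R'_{P'_\ell}}{P'_{\ell-1}}$ need not exist. Here it exists because $\frac{\widehat{\mathbf R}_{\mathbf H_{2\ell}}}{\mathbf H_{2\ell-1}\widehat{\mathbf R}_{\mathbf H_{2\ell}}}$ is a local ring dominating $R_{\nu_\ell}$, and that rests on the same facts your blow-up uses.
\end{itemize}

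The facts you flag as the main obstacle are immediate:
\begin{itemize}
\item The tree axiom $b^{-1}I''=I'$ gives both the map $\widehat{R'}_{\tilde H'_{2\ell}}\to\widehat{R''}_{\tilde H''_{2\ell}}$ and the inclusion $\tilde H'_{2\ell-1}\to\tilde H''_{2\ell-1}$.
\item The equality $\tilde H''_{2\ell-1}\cap R''=P''_{\ell-1}$ is the defining condition \eqref{eq:restrictionmain0} of $\mathcal I$.
\item The equality $\tilde H''_{2\ell}\cap R''=P''_\ell$ is the induction hypothesis.
\end{itemize}

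One harmless slip: a generator $a_i\in P'_{\ell-1}$ has $\nu_\ell(a_i)=\infty$, not a value in $\frac{\Delta_{\ell-1}}{\Delta_\ell}$. Even so, $\nu(a_i/y)$ still exceeds every element of $\Delta_\ell$, so $a_i/y\in P''_\ell$ anyway.
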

\begin{proof} The inclusion $\supset$ is clear from definitions. Let us prove the opposite inclusion.

By Zorn's lemma, there exist extensions $\tilde\nu$ of $\nu_\ell$ to valuations centered in the ring $\frac{\widehat{R'}_{\tilde H'_{2\ell}}}{\tilde H'_{2\ell-1}\widehat{R'}_{\tilde
H'_{2\ell}}}$. Fix one such extension $\tilde\nu$ and let $\tilde\P'_{\bar\beta}$ denote the greatest $\tilde\nu$-ideal of $\frac{\widehat{R'}_{\tilde H'_{2\ell}}}{\tilde
H'_{2\ell-1}\widehat{R'}_{\tilde H'_{2\ell}}}$ among those of value at least $\bar\beta$. We have
\begin{equation}
\widehat{\mathcal P}'_{\bar\beta ,\ell}\subset\tilde\P'_{\bar\beta},\label{eq:hatintilde}
\end{equation}
so $\widehat{\mathcal P}'_{\bar\beta ,\ell}\bigcap\frac{R'_{P'_\ell}}{P'_{\ell-1}}\subset\tilde\P'_{\bar\beta}\bigcap\frac{R'_{P'_\ell}}{P'_{\ell-1}}=\P'_{\bar\beta}$, which proves the desired inclusion.
\end{proof}

\begin{corollary} The restriction of the pseudo-valuation $\widehat\nu_{2\ell}$ to $\frac{R'_{P'_\ell}}{P'_{\ell-1}}$ coincides with $\nu_\ell$.
\end{corollary}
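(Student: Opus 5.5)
The corollary should follow from the lemma just proved, equation \eqref{eq:Jnotempty}, applied value by value. Write $A:=\frac{R'_{P'_\ell}}{P'_{\ell-1}}$ and $\widehat A:=\frac{\widehat{R'}_{\tilde H'_{2\ell}}}{\tilde H'_{2\ell-1}\widehat{R'}_{\tilde H'_{2\ell}}}$. First I check that $A$ injects into $\widehat A$. This follows from $\tilde H'_{2\ell-1}\cap R'=P'_{\ell-1}$, which is condition \eqref{eq:restrictionmain0}. It also uses $\tilde H'_{2\ell}\cap R'=P'_\ell$, so that elements of $R'\setminus P'_\ell$ become units in $\widehat{R'}_{\tilde H'_{2\ell}}$. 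Hence the restriction makes sense.

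For a nonzero $x\in A$, the definition \eqref{eq:mu2l} gives
$$\widehat\nu_{2\ell}(x)=\max\{\bar\beta\mid x\in\widehat{\mathcal P}'_{\bar\beta,\ell}\}.$$
Since $x\in A$, the lemma gives $x\in\widehat{\mathcal P}'_{\bar\beta,\ell}$ if and only if $x\in\P'_{\bar\beta}$, where $\P'_{\bar\beta}$ is understood as the $\nu_\ell$-ideal of $A$ of value $\bar\beta$. That condition holds if and only if $\nu_\ell(x)\ge\bar\beta$. So the set over which the maximum is taken is $\{\bar\beta\in(\Delta_{\ell-1}/\Delta_\ell)_+ \mid \bar\beta\le\nu_\ell(x)\}$, whose maximum is attained and equals $\nu_\ell(x)$. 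Thus $\widehat\nu_{2\ell}(x)=\nu_\ell(x)$. For $x=0$ both sides are $\infty$, because $\bigcap_{\bar\beta}\widehat{\mathcal P}'_{\bar\beta,\ell}=(0)$ as noted before \eqref{eq:mu2l}.

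The only point needing care is that the maximum in \eqref{eq:mu2l} is attained, rather than being just a supremum. For elements of $A$ this is settled by the computation above, since the set in question has $\nu_\ell(x)$ as its largest element. If needed, one can also invoke Corollary \ref{wellordered}: the value semigroup of $\nu_\ell$ on $A$ is order-isomorphic to $\mathbf N$, so the relevant $\bar\beta$ may be taken to range over that discrete semigroup. I also need to check that the identification of $\P'_{\bar\beta}$ with the $\nu_\ell$-ideal of $A$ matches the convention in \eqref{eq:validealmain}. This requires lifting $\bar\beta$ to $\Delta_{\ell-1}$ and working modulo $P'_{\ell-1}$ after localizing at $P'_\ell$, and I expect this bookkeeping to be routine.
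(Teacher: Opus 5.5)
Your proposal is correct and matches the paper's approach. The paper gives no separate proof and treats the corollary as an immediate consequence of \eqref{eq:Jnotempty}: read through the definition \eqref{eq:mu2l}, it shows that for $x\in\frac{R'_{P'_\ell}}{P'_{\ell-1}}$ the set of admissible $\bar\beta$ is $\{\bar\beta\ge0\mid\bar\beta\le\nu_\ell(x)\}$, whose maximum is $\nu_\ell(x)$. Your injectivity check is harmless extra bookkeeping, and the appeal to Corollary \ref{wellordered} is unnecessary for elements of $\frac{R'_{P'_\ell}}{P'_{\ell-1}}$.
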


\begin{remark}\label{tildelessthanhat} The inclusion \eqref{eq:hatintilde} is equivalent to saying that for every $x\in\frac{\widehat{R'}_{\tilde H'_{2\ell}}}{\tilde
H'_{2\ell-1}\widehat{R'}_{\tilde H'_{2\ell}}}$ we have $\tilde\nu(x)\le\widehat\nu_{2\ell}(x)$. The next lemma shows that this inequality is, in fact, an equality.
\end{remark}

Keep the notation of the above remark. Let $\bar\beta=\widehat\nu_{2\ell}(x)$.

\begin{lemma} We have
\begin{equation}\label{eq:mu2ltilde}
\tilde\nu(x)=\bar\beta.
\end{equation}
\end{lemma}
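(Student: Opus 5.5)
The inclusion \eqref{eq:hatintilde} gives $\tilde\nu(x)\ge\bar\beta$: indeed $x\in\widehat{\mathcal P}'_{\bar\beta,\ell}\subset\tilde\P'_{\bar\beta}$. So the plan is to prove the reverse inequality $\tilde\nu(x)\le\bar\beta$. I will do this by transferring $x$ back into $\frac{R'_{P'_\ell}}{P'_{\ell-1}}$ with Lemma \ref{crucial}, and then using that on that ring both $\tilde\nu$ and $\widehat\nu_{2\ell}$ restrict to $\nu_\ell$ (Lemma \eqref{eq:Jnotempty} and its corollary).

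\emph{Step 1.} By Lemma \ref{crucial} there is $z$ with
$$0\ne y:=xz\in(x)\cap\frac{R'_{P'_\ell}}{P'_{\ell-1}}.$$
Put $\gamma:=\nu_\ell(y)$. Then $\gamma=\tilde\nu(y)=\tilde\nu(x)+\tilde\nu(z)$, and also $\gamma=\widehat\nu_{2\ell}(y)$ by the corollary.

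\emph{Step 2.} Let $\bar\beta_1:=\widehat\nu_{2\ell}(z)$. Since $\P'_{\bar\beta}\P'_{\bar\beta_1}\subset\P'_{\bar\beta+\bar\beta_1}$ and this survives extension to $\widehat{R'}_{\tilde H'_{2\ell}}$ modulo $\tilde H'_{2\ell-1}$, we have superadditivity: $\gamma\ge\bar\beta+\bar\beta_1$. It then suffices to show the equality
$$\widehat\nu_{2\ell}(xz)=\widehat\nu_{2\ell}(x)+\widehat\nu_{2\ell}(z),$$
at least for this particular product $y$. Granting it, $\tilde\nu(x)=\gamma-\tilde\nu(z)\le\gamma-\bar\beta_1=\bar\beta$, because $\tilde\nu(z)\ge\bar\beta_1$ by \eqref{eq:hatintilde} again.

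\emph{Step 3 (the main obstacle).} The difficulty is proving this additivity, i.e.\ that the initial forms of $x$ and $z$ with respect to the filtration $\{\widehat{\mathcal P}'_{\bar\beta,\ell}\}$ have non-zero product. By Corollary \ref{wellordered} the relevant values form a discrete set, so $\bar\beta^+$, the successor of $\bar\beta$, makes sense. I would first try to show that the associated graded ring of this filtration is a domain, by identifying its homogeneous pieces $\widehat{\mathcal P}'_{\bar\beta,\ell}/\widehat{\mathcal P}'_{\bar\beta^+,\ell}$ with $\P'_{\bar\beta}/\P'_{\bar\beta^+}$ tensored with a suitable ring. This would use flatness of $\widehat{R'}$ over $R'$ and the fact that $\tilde H'_{2\ell-1}$ is prime.

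If that identification fails because quotienting by $\tilde H'_{2\ell-1}$ destroys flatness, I would argue by contradiction instead. Suppose some pair $x,z$ has $\widehat\nu_{2\ell}(xz)>\widehat\nu_{2\ell}(x)+\widehat\nu_{2\ell}(z)$. I would then enlarge $\tilde H'_{2\ell-1}$ by the relevant ideal of elements and check that the enlarged ideal still contracts to $P'_{\ell-1}$. The tool for this check is Lemma \ref{crucial} combined with Lemma \eqref{eq:Jnotempty}, applied along the whole tree $\mathcal T(R')$ exactly as in the proof of Lemma \ref{crucial}. This would contradict the maximality of $\{\tilde H'_{2\ell-1}\}$ in $\mathcal I$, and so yields the additivity and hence \eqref{eq:mu2ltilde}.
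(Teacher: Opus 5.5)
\paragraph{What your Steps 1 and 2 get right, and where the paper's chain breaks.} Your Steps 1 and 2 are sound. Your reading of \eqref{eq:hatintilde} is also the correct one: it says that $\widehat\nu_{2\ell}(x)\ge\bar\beta$ implies $\tilde\nu(x)\ge\bar\beta$, i.e.\ $\tilde\nu\ge\widehat\nu_{2\ell}$. Remark \ref{tildelessthanhat} asserts the reverse inequality. The paper's proof is the single chain \eqref{eq:inequalityisequality}, and its second step $\bar\beta+\widehat\nu_{2\ell}(u)\ge\tilde\nu(x)+\tilde\nu(u)$ uses exactly that reversed inequality; this is how the paper avoids any additivity statement. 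With the correct direction, the two available inequalities point the same way: $\widehat\nu_{2\ell}(x)+\widehat\nu_{2\ell}(u)\le\widehat\nu_{2\ell}(y)=\nu_\ell(y)$ and $\widehat\nu_{2\ell}(x)+\widehat\nu_{2\ell}(u)\le\tilde\nu(x)+\tilde\nu(u)=\nu_\ell(y)$. Together they force nothing. So the written argument gives you nothing to borrow, and your reduction to $\widehat\nu_{2\ell}(xz)=\widehat\nu_{2\ell}(x)+\widehat\nu_{2\ell}(z)$ correctly isolates what has to be proved.

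\paragraph{The gap: Step 3 is never proved.} Step 3 is the entire content of the lemma; the paper deduces from this lemma that $\widehat\nu_{2\ell}$ is a valuation. So the proposal has a genuine gap. Moreover, additivity cannot follow from the tools you name. Take $B=k[t]_{(t)}\subset A=k[s]_{(s)}$ with $s^2=t$. Here $A$ is flat over $B$, $t^nA\cap B=t^nB$, every nonzero principal ideal (and every nonzero prime) of $A$ meets $B$, and the value semigroup is $\mathbf N$. Yet $\max\{n : s\in t^nA\}=0$, whereas the valuation centered in $A$ extending $\mathrm{ord}_t$ gives $s$ the value $\frac12$. Additivity fails for $x=z=s$, $y=t$, and the graded ring of $\{t^nA\}$ has the nonzero nilpotent $\bar s$ in degree $0$.

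This defeats your first strategy: flatness yields graded pieces $\P_{\bar\beta}/\P_{\bar\beta^+}\otimes_BA$, and these need not multiply without zero divisors. Your second strategy names no prime tree to adjoin, since the elements violating additivity do not form an ideal. In the example the one-level analogue of the maximality of $\tilde H'_{2\ell-1}$ already holds, so such an argument would have to use the tree structure in a way you do not specify.

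\paragraph{What is missing.} You need an input specific to completion that ties elements of $\widehat{R'}$ to elements of $R'$. For $\ell=r$, where $\tilde H'_{2r}=m'\widehat{R'}$, this is $m'$-adic approximation:
\begin{enumerate}
\item Lemma \ref{crucial} gives $\tilde\nu(x)\le\nu_r(y)<\gamma$ for some $\gamma$.
\item Since $\nu_r$ has rank one, $m'^N\subset\P'_\gamma$ for some $N$.
\item Choose $a_0\in R'$ congruent to a lift of $x$ modulo $m'^N\widehat{R'}$. Then $\tilde\nu(x)=\nu_r(a_0)$ and $x\in\widehat{\mathcal P}'_{\nu_r(a_0),r}$, hence $\widehat\nu_{2r}(x)\ge\tilde\nu(x)$.
\end{enumerate}
For $\ell<r$ this is not available as stated: in general $m'^N\not\subset P'_\ell$, and $\kappa(\tilde H'_{2\ell})$ may be much larger than $\kappa(P'_\ell)$. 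Neither your Step 3 nor the paper's chain supplies a substitute.
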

\begin{proof} Take a non-zero element $y\in(x)\bigcap\frac{R'_{P'_\ell}}{P'_{\ell-1}}$ (such an element exists by Lemma \ref{crucial}). Let $u\in\frac{\widehat{R'}_{\tilde
H'_{2\ell}}}{\tilde H'_{2\ell-1}\widehat{R'}_{\tilde H'_{2\ell}}}$ be such that $y=ux$. Since $\widehat\nu_{2\ell}$ is a pseudo-valuation extending $\nu_\ell$, we have
\begin{equation}
\nu_\ell(y)=\widehat\nu_{2\ell}(y)=\widehat\nu_{2\ell}(xu)\ge\bar\beta+\widehat\nu_{2\ell}(u)\ge\tilde\nu(x)+\tilde\nu(u)=\tilde\nu(y)=\nu_\ell(y),
\label{eq:inequalityisequality}
\end{equation}
where the first inequaliity is given by the ultrametric triangle rule and the second follows from Remark \ref{tildelessthanhat}, applied separately to $x$ and $u$. Thus all the inequalities in \eqref{eq:inequalityisequality} are equalities and the result follows.
\end{proof}

\begin{corollary}
\begin{enumerate}
\item The pseudo-valuation $\widehat\nu_{2\ell}$ is, in fact, a valuation.
\item The valuation $\widehat\nu_{2\ell}$ is the unique valuation extending $\nu_\ell$ to $\frac{\widehat{R'}_{\tilde H'_{2\ell}}}{\tilde H'_{2\ell-1}\widehat{R'}_{\tilde
H'_{2\ell}}}$.
\end{enumerate}
\end{corollary}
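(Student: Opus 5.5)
Both parts follow from the preceding lemma, applied to an arbitrary extension $\tilde\nu$ of $\nu_\ell$. Write $A:=\frac{\widehat{R'}_{\tilde H'_{2\ell}}}{\tilde H'_{2\ell-1}\widehat{R'}_{\tilde H'_{2\ell}}}$. We first need $A$ to be a domain, which holds because $\tilde H'_{2\ell-1}$ is prime. By Zorn's lemma (as in the proof of (\ref{eq:Jnotempty})) there is at least one valuation $\tilde\nu$ of the fraction field of $A$ that extends $\nu_\ell$ and is centered in $A$. Fix any such $\tilde\nu$. By Remark \ref{tildelessthanhat} combined with the lemma just proved (equation (\ref{eq:mu2ltilde})), we get $\tilde\nu(x)=\widehat\nu_{2\ell}(x)$ for every non-zero $x\in A$.

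For part (1), we now read off the valuation axioms for $\widehat\nu_{2\ell}$ from those of $\tilde\nu$. Multiplicativity holds because $\widehat\nu_{2\ell}(xy)=\tilde\nu(xy)=\tilde\nu(x)+\tilde\nu(y)=\widehat\nu_{2\ell}(x)+\widehat\nu_{2\ell}(y)$. The ultrametric inequality transfers in the same way. We also have $\widehat\nu_{2\ell}(x)<\infty$ for $x\ne0$; this uses $\bigcap_{\bar\beta}\widehat{\mathcal P}'_{\bar\beta,\ell}=(0)$, or alternatively the fact that $\tilde\nu(x)$ is finite. We then extend $\widehat\nu_{2\ell}$ to the fraction field of $A$ by $\widehat\nu_{2\ell}(a/b)=\widehat\nu_{2\ell}(a)-\widehat\nu_{2\ell}(b)$. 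This is well defined because it agrees with $\tilde\nu$. Hence $\widehat\nu_{2\ell}$ is a valuation, and its value group lies in $\frac{\Delta_{\ell-1}}{\Delta_\ell}$.

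For part (2), let $\tilde\nu$ be an arbitrary valuation extending $\nu_\ell$ and centered in $A$. We have just shown that $\tilde\nu=\widehat\nu_{2\ell}$ on $A\setminus\{0\}$, and hence on its fraction field. The one point to check is that the proof of the preceding lemma used nothing particular about the fixed $\tilde\nu$. It used only three facts: the inclusion (\ref{eq:hatintilde}), which holds for any extension centered in $A$; Lemma \ref{crucial}; and the corollary that $\widehat\nu_{2\ell}$ restricts to $\nu_\ell$. All three are independent of the choice of $\tilde\nu$. Therefore every such extension coincides with $\widehat\nu_{2\ell}$, and this gives uniqueness.

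The main subtlety is the existence of at least one $\tilde\nu$, which part (1) requires. Concretely, we need a valuation centered at the local domain $A$ that dominates the local ring $\frac{R'_{P'_\ell}}{P'_{\ell-1}}$ and restricts to $\nu_\ell$. The inclusion $\frac{R'_{P'_\ell}}{P'_{\ell-1}}\to A$ is injective since $\tilde H'_{2\ell-1}\cap R'=P'_{\ell-1}$, and it is local since $\tilde H'_{2\ell}\cap R'=P'_\ell$. Given this, the standard extension theorem for valuations to a domain containing the valuation's center provides such a $\tilde\nu$, as was already invoked in the previous lemma.
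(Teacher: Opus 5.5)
Your reduction is the paper's own. The corollary is stated without proof right after Lemma~(\ref{eq:mu2ltilde}), and once one knows $\tilde\nu=\widehat\nu_{2\ell}$ on $A\setminus\{0\}$ for every extension $\tilde\nu$ of $\nu_\ell$ centered in $A$, both parts follow as you say. The gap is that this identity does not follow from the three facts you list, because you inherit an inequality stated in the wrong direction.

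The inclusion (\ref{eq:hatintilde}) says that $\widehat\nu_{2\ell}(x)\ge\bar\beta$ implies $\tilde\nu(x)\ge\bar\beta$, i.e.\ $\widehat\nu_{2\ell}\le\tilde\nu$. Remark~\ref{tildelessthanhat} asserts the opposite, $\tilde\nu\le\widehat\nu_{2\ell}$, and the second ``$\ge$'' in (\ref{eq:inequalityisequality}) uses that reversed inequality. With the correct direction, (\ref{eq:hatintilde}), Lemma~\ref{crucial} and the restriction property yield only $\tilde\nu\ge\widehat\nu_{2\ell}$ and $\tilde\nu(x)+\tilde\nu(u)=\nu_\ell(y)=\widehat\nu_{2\ell}(xu)\ge\widehat\nu_{2\ell}(x)+\widehat\nu_{2\ell}(u)$. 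These do not force equality. The missing inequality $\tilde\nu(x)\le\widehat\nu_{2\ell}(x)$, i.e.\ $\tilde\nu(x)\ge\bar\beta\Rightarrow x\in\widehat{\mathcal P}'_{\bar\beta,\ell}$, is the whole content of the corollary. So checking that the inputs do not depend on $\tilde\nu$ does not close the argument. Your alternative proof of finiteness, $\widehat\nu_{2\ell}(x)\le\tilde\nu(x)<\infty$, is the one step that survives.

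In fact the statement itself can fail at a fixed $R'$. Let $\operatorname{char}k\ne2$, $R=k[t,x,y]_{(t,x,y)}$ and $s=\sqrt{1+t}\in k[[t]]$. Let $\nu_1$ be the divisorial valuation of $R_{(x,y)}$ obtained by blowing up $(x,y)$ and then the degree-two point $x^2-(1+t)y^2=0$ of the exceptional line. Then $\nu_1(x)=\nu_1(y)=1$, $\nu_1(x^2-(1+t)y^2)=3$ and $\P_2=(x,y)^2$. Let $\nu=\nu_1\circ\nu_2$ with $\nu_2$ divisorial.

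Far out in $\mathcal T$ there are two-dimensional regular $R'''$ in which $P'''_1$ has height one. This forces $\tilde H'''_2$ to have height one and $\tilde H'''_1=(0)$. By contraction, for $\ell=1$ and $R'=R$ one gets $\tilde H_1=(0)$, $\tilde H_2=(x,y)\widehat R$, and $A=k[[t,x,y]]_{(x,y)}$, regular of dimension two with residue field $k((t))$.

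Since $x\pm sy\notin(x,y)^2A$, we get $\widehat\nu_2(x\pm sy)=1$ while $\widehat\nu_2(x^2-(1+t)y^2)\ge3$, so (1) fails. Blow up $(x,y)A$ and then each of the two $k((t))$-rational points $x\mp sy=0$. The order functions of the two resulting exceptional divisors both restrict to $\nu_1$ but differ on $x-sy$, so (2) fails. Lemma~\ref{crucial} also fails at this level: take $x-\phi(t)y$ with $\phi\in k[[t]]$ transcendental over $k(t)$. A correct proof has to pass to $R'$ far enough out in $\mathcal T$, or to the limit ring. There $\widehat{\mathbf P}_{\bar\beta,\ell}$ is principal by Remark~\ref{principality}, and the maximality of $\tilde H_{2\ell-1}$ can actually be used.

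Finally, your existence argument for $\tilde\nu$ is too weak. Injectivity and locality of $R'_{P'_\ell}/P'_{\ell-1}\to A$ do not by themselves give an extension of $\nu_\ell$ centered in $A$: consider $k[x,y]_{(x,y)}\subset k[x,y/x]_{(x,y/x)}$ with $\nu(x)=2$, $\nu(y)=1$. Existence should instead come from a valuation ring dominating the direct limit of the rings $A$ over $\mathcal T$, which is local and dominates $R_{\nu_\ell}$.
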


\begin{remark} Since the construction above is valid for all $R''$ in $\mathcal T$, $\widehat\nu_{2\ell}$ extends naturally to a valuation $\widehat\nu_{2\ell}:\frac{\widehat{\bf
R}_{\mathbf H_{2\ell}}}{\bf H_{2\ell+1}}\setminus\{0\}\rightarrow\frac{\Delta_{\ell-1}}{\Delta_\ell}$ (which we still denote by $\widehat\nu_{2\ell}$). The valuation
$\widehat\nu_{2\ell}$ is centered in the local ring $\frac{\widehat{\bf R}_{\mathbf H_{2\ell}}}{\bf H_{2\ell+1}}$. Moreover, $\widehat\nu_{2\ell}$ is the unique valuation extending
$\nu_\ell$ with this property.
\end{remark}

Let
$$
\mathbf P_{\bar\beta}:=\lim\limits_{\overset\longrightarrow{R'\in\mathcal{T}}}\frac{\P'_{\bar\beta}R'_{P'_\ell}}{P'_{\ell-1}R'_{P'_\ell}},
$$
$$
\widehat{\mathbf P}_{\bar\beta,\ell}:=\lim\limits_{\overset\longrightarrow{R'\in\mathcal{T}}}\widehat{\P'}_{\bar\beta,\ell},
$$
and similarly for $\mathbf P^+_{\bar\beta}$ and $\widehat{\mathbf P}^+_{\bar\beta,\ell}$. Recall that
$$
\mathbf m_\ell=\lim\limits_{\overset\longrightarrow{R'\in\mathcal{T}}}P'_\ell.
$$
By definitions (see \eqref{eq:validealmain}), for all $\bar\beta\in\left(\frac{\Delta_{\ell-1}}{\Delta_\ell}\right)_+$, we have a natural surjective homomorphism
\begin{equation}
\lambda_{\bar\beta}:\frac{\P'_{\bar\beta}}{{\P'}^+_{\bar\beta}}\otimes_{\kappa\left(P'_{\ell}\right)}\kappa\left(\tilde
H'_{2\ell}\right)\longrightarrow \frac{\widehat{\mathcal P'}_{\bar\beta,\ell}}{\widehat{\mathcal{P'}}^+_{\bar\beta,\ell}}\label{eq:treeisomorphism}
\end{equation}
of $\kappa\left(\tilde H'_{2\ell}\right)$-vector spaces. Passing to direct limits as $R'$ runs over the tree $\mathcal T$, we obtain a natural surjective homomorphism
\begin{equation}\label{eq:limitisomorphism}
\lambda_{\bar\beta}:\frac{\mathbf P_{\bar\beta}}{\mathbf P^+_{\bar\beta}}\otimes_{\kappa\left(\mathbf P_{\ell}\right)}\kappa(\mathbf H_{2\ell})\longrightarrow \frac{\widehat{\mathbf P}_{\bar\beta,\ell}}{\widehat{\mathbf P}^+_{\bar\beta,\ell}}
\end{equation}
of $\kappa\left(\mathbf H_{2\ell}\right)$-vector spaces.

\begin{remark}\label{principality}
\begin{enumerate}
\item We have
$$
\frac{(R_\nu)_{\mathbf m_\ell}}{\mathbf m_{\ell-1}(R_\nu)_{\mathbf m_\ell}}=R_{\nu_\ell}.
$$
The ideal $\mathbf P_{\bar\beta}\subset\frac{(R_\nu)_{\mathbf m_\ell}}{\mathbf m_{\ell-1}(R_\nu)_{\mathbf m_\ell}}$ is the $\nu_\ell$-ideal of value $\bar\beta$; in particular,
$\mathbf P_{\bar\beta}$ is principal, generated by any element $x\in\frac{(R_\nu)_{\mathbf m_\ell}}{\mathbf m_{\ell-1}(R_\nu)_{\mathbf m_\ell}}$ such that
$\nu_\ell(x)=\bar\beta$.
\item By definitions (see \eqref{eq:validealmain}), we have
\begin{equation}
\widehat{\mathbf P}_{\bar\beta,\ell}=\mathbf P_{\bar\beta}\frac{\widehat{\bf R}_{\mathbf H_{2\ell}}}{\mathbf H_{2\ell-1}\widehat{\bf R}_{\mathbf H_{2\ell}}}.\label{eq:extendedprincipal}
\end{equation}
Hence the ideal $\widehat{\mathbf P}_{\bar\beta,\ell}$ is also principal.
\end{enumerate}
\end{remark}

By Remark \ref{principality}, both $\kappa(\mathbf H_{2\ell})$-vector spaces in \eqref{eq:limitisomorphism} are one-dimensional.
 
\begin{corollary}\label{isomorphism} The map (\ref{eq:limitisomorphism}) (and hence also \eqref{eq:treeisomorphism}) is an isomorphism.
\end{corollary}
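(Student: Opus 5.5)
The map \eqref{eq:limitisomorphism} is a surjective $\kappa(\mathbf H_{2\ell})$-linear map between two vector spaces which, by Remark \ref{principality}, are both one-dimensional. The plan is therefore to check that both spaces are indeed nonzero of dimension one, so that surjectivity forces injectivity, and then to pass from the limit statement back to each individual $R'$.

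First, the source. Since $\mathbf P_{\bar\beta}$ is the principal $\nu_\ell$-ideal generated by any $x$ with $\nu_\ell(x)=\bar\beta$, and $\mathbf P^+_{\bar\beta}=x\,\mathbf m_{\nu_\ell}$, the quotient $\mathbf P_{\bar\beta}/\mathbf P^+_{\bar\beta}$ is a one-dimensional vector space over the residue field of $R_{\nu_\ell}$, spanned by the class of $x$. Passing to the limit over $\mathcal T$ identifies $\kappa(\mathbf P_\ell)$ with that residue field. After tensoring with $\kappa(\mathbf H_{2\ell})$, the source is therefore one-dimensional and spanned by $\bar x\otimes 1$.

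Second, the target. By \eqref{eq:extendedprincipal}, $\widehat{\mathbf P}_{\bar\beta,\ell}$ is generated by the same $x$. The key point is that $\widehat{\mathbf P}^+_{\bar\beta,\ell}$ is exactly $x$ times the maximal ideal $\mathbf H_{2\ell}$ of the local ring $\widehat{\bf R}_{\mathbf H_{2\ell}}/\mathbf H_{2\ell-1}$. To see this, use the Corollary that $\widehat\nu_{2\ell}$ is a valuation extending $\nu_\ell$. For $z=xu$ we get $\widehat\nu_{2\ell}(z)=\bar\beta+\widehat\nu_{2\ell}(u)$, so $z$ has value $>\bar\beta$ if and only if $\widehat\nu_{2\ell}(u)>0$. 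Since $\widehat\nu_{2\ell}$ is centered at $\mathbf H_{2\ell}$, this holds if and only if $u$ lies in the maximal ideal. Hence the target is isomorphic to $\kappa(\mathbf H_{2\ell})$ and is spanned by the class of $x$. It is nonzero because $\widehat\nu_{2\ell}(x)=\nu_\ell(x)=\bar\beta$, which gives $x\notin\widehat{\mathbf P}^+_{\bar\beta,\ell}$.

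The map $\lambda_{\bar\beta}$ sends the generator $\bar x\otimes1$ to the class of $x$, which is nonzero. A nonzero linear map between one-dimensional spaces is an isomorphism.

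Third, the statement for each individual $R'$, namely for \eqref{eq:treeisomorphism}. Suppose $w$ lies in the kernel of \eqref{eq:treeisomorphism} at level $R'$. Its image at every further level $R''$ lies in the kernel there, so its image in the limit is zero, since the limit map is injective. It remains to show that the transition maps of the source tree are injective. A class in $\P'_{\bar\beta}/{\P'}^+_{\bar\beta}$ is detected by its $\nu$-value, which is preserved under $R'\to R''$. The residue fields $\kappa(\tilde H'_{2\ell})\to\kappa(\tilde H''_{2\ell})$ are field extensions, and tensoring over fields is exact. So the source maps injectively into the limit, and $w=0$.

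The main obstacle I expect is this last bookkeeping step: one has to justify that the source term in \eqref{eq:treeisomorphism} embeds into its direct limit, which requires care with the tensor products over the varying residue fields $\kappa(P'_\ell)\subset\kappa(P''_\ell)$. I would handle it by writing the source as the $\kappa(\tilde H'_{2\ell})$-span of the classes of elements of value exactly $\bar\beta$. Then I would compare it with the level-$R'$ analogue of the target computation above: at level $R'$ the target is $\widehat{\mathcal P}'_{\bar\beta,\ell}/\widehat{\mathcal P}'^+_{\bar\beta,\ell}$, and the same valuation argument, using that $\widehat\nu_{2\ell}$ restricts to $\nu_\ell$, shows that any $\kappa(\tilde H'_{2\ell})$-combination of such classes that maps to zero must vanish already at level $R'$.
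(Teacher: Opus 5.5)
\paragraph{The limit map.} For \eqref{eq:limitisomorphism} your argument is correct and is the paper's own. The paper cites Remark~\ref{principality} for the one-dimensionality of both sides and concludes that the surjection $\lambda_{\bar\beta}$ is an isomorphism. Your valuation computation of $\widehat{\mathbf P}^+_{\bar\beta,\ell}$ just makes the one-dimensionality of the target explicit.

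\paragraph{The gap in your third step.} The failing claim is that the source tree has injective transition maps, and this is not bookkeeping. Exactness of $-\otimes_k K$ holds over a fixed field $k$. The transition map of the source tree, however, factors through the natural surjection $V''\otimes_{\kappa(P'_\ell)}\kappa(\tilde H''_{2\ell})\to V''\otimes_{\kappa(P''_\ell)}\kappa(\tilde H''_{2\ell})$, where $V''=\P''_{\bar\beta}/{\P''}^+_{\bar\beta}$, and this surjection is generally not injective. Classes that are $\kappa(P'_\ell)$-independent at level $R'$ become proportional in the one-dimensional limit. If the proportionality factor already lies in $\kappa(\tilde H'_{2\ell})$, you get a nonzero kernel.

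Here is an example. Let $R=k[u,v,z]_{(u,v,z)}$ and $\nu=\nu_1\circ\nu_2$, with $\nu_1=\mathrm{ord}_{(u,v)}$ and $\nu_2(g(v/u,z))=\mathrm{ord}_z\, g(\phi(z),z)$, where $\phi\in zk[[z]]$ is transcendental over $k(z)$. Take $\ell=1$ and $\bar\beta=1$.
\begin{itemize}
\item The ideal $\tilde H_2=(u,v)\widehat R$ is forced, so $\kappa(\tilde H_2)=k((z))$. The level-$R$ source $\frac{(u,v)}{(u,v)^2}\otimes_{k(z)}k((z))$ is therefore two-dimensional, with basis $\bar u\otimes 1$, $\bar v\otimes 1$.
\item Let $R'=R[v/u]_{(u,v/u,z)}$. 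The element $v/u-\phi(z)$ lies in $H'_3\subset\tilde H'_2$. To see this, write $v/u-\phi=(v/u-\Phi_n)-(\phi-\Phi_n)$, where $\Phi_n$ is the degree-$n$ truncation of $\phi$; this shows it lies in $\P'_\beta\widehat{R'}$ for every $\beta\in\Delta_1$.
\item Consequently the map $\kappa(P'_1)=k(v/u,z)\to\kappa(\tilde H'_2)$ sends $v/u$ to $\phi(z)$.
\item The nonzero element $\xi=\bar v\otimes 1-\bar u\otimes\phi(z)$ is therefore sent to $\bar u\otimes v/u-\bar u\otimes\phi(z)=0$ already at level $R'$.
\end{itemize}

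\paragraph{Consequences.} Use the very properties of $\widehat\nu_2$ that you invoke in your second step: it is a valuation extending $\nu_1$, centered at $\tilde H'_2$, and the same at all levels of the tree. Then $\widehat\nu_2(v-u\phi(z))=1+\widehat\nu_2(v/u-\phi(z))\ge 2$, so $\lambda_1(\xi)=0$ already at level $R$. At level $R$, then, \eqref{eq:treeisomorphism} is a surjection from a two-dimensional onto a one-dimensional $k((z))$-space.

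So the level-wise statement does not follow from the limit one. Your fallback claim, that a $\kappa(\tilde H'_{2\ell})$-combination mapping to zero must vanish at level $R'$, also fails on $\xi$. What your argument establishes, like the paper's one-line proof, is the isomorphism \eqref{eq:limitisomorphism}. The paper's parenthetical ``hence also'' is not justified by that argument, although only the limit version is used afterwards, in Corollary~\ref{integraldomain}.
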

\begin{corollary}\label{integraldomain} The graded algebra
$$
\gr_{\nu_\ell}\frac{(R_\nu)_{\mathbf m_\ell}}{\mathbf m_{\ell-1}(R_\nu)_{\mathbf m_\ell}}\otimes_{\kappa(\mathbf m_{\ell})}\kappa(\mathbf
H_{2\ell})\cong\bigoplus\limits_{\bar\beta\in\left(\frac{\Delta_{\ell-1}}{\Delta_\ell}\right)_+}\frac{\widehat{\mathbf P}_{\bar\beta ,\ell}}{\widehat{\mathbf
P}^+_{\bar\beta,\ell}}=\gr_{\widehat\nu_{2\ell}}\frac{\widehat{\bf R}_{\mathbf H_{2\ell}}}{\mathbf H_{2\ell-1}\widehat{\bf R}_{\mathbf H_{2\ell}}}
$$
is an integral domain.
\end{corollary}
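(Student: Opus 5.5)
The plan is to deduce the statement from Corollary \ref{isomorphism} together with the fact that a graded algebra of a valuation is always an integral domain. There are three identifications to make, and the integral-domain property then comes for free.

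First, the left-hand side. By Remark \ref{principality}(1), $\frac{(R_\nu)_{\mathbf m_\ell}}{\mathbf m_{\ell-1}(R_\nu)_{\mathbf m_\ell}}=R_{\nu_\ell}$. Its $\nu_\ell$-ideals are exactly the $\mathbf P_{\bar\beta}$, because the ideals of the valuation ring are direct limits of the valuation ideals $\frac{\P'_{\bar\beta}R'_{P'_\ell}}{P'_{\ell-1}R'_{P'_\ell}}$ over $\mathcal T$. Hence its graded algebra is $\bigoplus_{\bar\beta}\frac{\mathbf P_{\bar\beta}}{\mathbf P^+_{\bar\beta}}$. Since $\kappa(\mathbf m_\ell)=\lim\kappa(P'_\ell)$ and tensor product commutes with direct limits, tensoring with $\kappa(\mathbf H_{2\ell})$ and applying Corollary \ref{isomorphism} degree by degree identifies the left-hand side with $\bigoplus_{\bar\beta}\frac{\widehat{\mathbf P}_{\bar\beta,\ell}}{\widehat{\mathbf P}^+_{\bar\beta,\ell}}$. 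This gives the first isomorphism.

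The step I expect to need the most care is checking that the direct sum of these isomorphisms respects multiplication. The maps $\lambda_{\bar\beta}$ come from the inclusions $\P'_{\bar\beta}\subset\widehat{\mathcal P}'_{\bar\beta,\ell}$ followed by scalar extension. Products of representatives therefore map to products of representatives, using $\widehat{\mathcal P}'_{\bar\beta}\widehat{\mathcal P}'_{\bar\gamma}\subset\widehat{\mathcal P}'_{\bar\beta+\bar\gamma}$, so the identification is an isomorphism of graded $\kappa(\mathbf H_{2\ell})$-algebras.

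Second, the equality with $\gr_{\widehat\nu_{2\ell}}$. Here I will use the definition \eqref{eq:mu2l} and the fact that $\widehat\nu_{2\ell}$ is a valuation (the preceding Corollary and Remark). These show that $\widehat{\mathbf P}_{\bar\beta,\ell}$ is precisely the set of elements of value $\ge\bar\beta$ and $\widehat{\mathbf P}^+_{\bar\beta,\ell}$ the set of elements of value $>\bar\beta$. One point to check is that $\widehat{\mathbf P}^+_{\bar\beta,\ell}$, defined "similarly", coincides with $\{\widehat\nu_{2\ell}>\bar\beta\}$. This follows by taking $\P'^+_{\bar\beta}=\P'_{\bar\beta'}$ with $\bar\beta'$ the successor value, which exists by Corollary \ref{wellordered}, and then passing to the limit.

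Finally, $\gr_{\widehat\nu_{2\ell}}$ of any ring on which $\widehat\nu_{2\ell}$ is a valuation is an integral domain. Indeed, $\widehat\nu_{2\ell}(xy)=\widehat\nu_{2\ell}(x)+\widehat\nu_{2\ell}(y)$, so the product of the initial forms of $x$ and $y$ is the initial form of $xy$, which is nonzero. Transporting this along the isomorphisms above gives that the left-hand tensor product is an integral domain, which completes the plan.
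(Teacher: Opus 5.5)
Your proposal is correct and follows the paper's argument, which the paper leaves implicit: the first isomorphism is Corollary~\ref{isomorphism} applied degree by degree, the equality with $\gr_{\widehat\nu_{2\ell}}$ comes from the definition \eqref{eq:mu2l}, and the integral-domain property holds because $\widehat\nu_{2\ell}$ is a valuation. Your extra checks fill in details the paper does not spell out, namely that the maps $\lambda_{\bar\beta}$ respect multiplication and that $\widehat{\mathbf P}^+_{\bar\beta,\ell}$ is exactly the set of elements of value $>\bar\beta$.
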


The extension $\widehat\mu_{2\ell}$ of $\mu_\ell$ to $\frac{\widehat{\bf R}}{\mathbf H_{2\ell-1}}$ is defined by
$\widehat\mu_{2\ell}=\widehat\nu_{2\ell}\circ\widehat\mu_{2\ell+2}$. This completes the definition of $\widehat\mu_{2\ell}$, $\ell\in\{1,\dots,r\}$, by descending recursion on
$\ell$.

\begin{lemma}\label{scalewise-birational} The natural inclusion
\begin{equation}
\gr_{\mu_\ell}\frac R{P_{\ell-1}}\hookrightarrow\gr_{\widehat\mu_{2\ell}}\frac{\widehat R}{\tilde H_{2\ell-1}}\label{eq:grbirational}
\end{equation}
of graded algebras is scalewise-birational.
\end{lemma}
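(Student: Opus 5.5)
We argue by descending induction on $\ell$, assuming the scalewise-birationality of $\gr_{\mu_{\ell+1}}\frac{R}{P_\ell}\hookrightarrow\gr_{\widehat\mu_{2\ell+2}}\frac{\widehat R}{\tilde H_{2\ell+1}}$, which is part of the recursive hypothesis. First we check $G_0=\widehat G_0$. Degree zero of $\gr_{\mu_\ell}\frac R{P_{\ell-1}}$ is $k$. Degree zero of $\gr_{\widehat\mu_{2\ell}}\frac{\widehat R}{\tilde H_{2\ell-1}}$ is the residue field of $\widehat\mu_{2\ell}$ on $\widehat R/\tilde H_{2\ell-1}$, that is, $\widehat R/m\widehat R=k$. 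The key structural point is that for a composite valuation $\widehat\mu_{2\ell}=\widehat\nu_{2\ell}\circ\widehat\mu_{2\ell+2}$, a homogeneous element $x$ of degree $\gamma$, where $\gamma$ has image $\bar\gamma\in\frac{\Delta_{\ell-1}}{\Delta_\ell}$, factors through $\gr_{\widehat\nu_{2\ell}}$. Concretely, $\init_{\widehat\nu_{2\ell}}(x)$ lies in the $\bar\gamma$-piece, which is one-dimensional over $\kappa(\mathbf H_{2\ell})$ by Corollary \ref{isomorphism}.

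Now let $x\in\widehat G=\gr_{\widehat\mu_{2\ell}}\frac{\widehat R}{\tilde H_{2\ell-1}}$ be homogeneous with $ord\ x\in\widehat\Delta_{\ell'}$ for some $\ell'\ge\ell-1$ (the case relevant at level $\ell$). Lift $x$ to an element $\hat x\in\widehat R$. We need $y\in\gr_{\mu_\ell}\frac R{P_{\ell-1}}$ with $ord\ y\in\Delta_{\ell'}$ and $xy\in G$. We proceed in two steps.

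In the first step, we use Lemma \ref{crucial}, or more precisely the isomorphism (\ref{eq:treeisomorphism}). We pick $z\in R'_{P'_\ell}$, and after clearing denominators $z\in R$ (possibly passing to some $R'\in\mathcal T$, which is harmless since graded algebras of $R$ and $R'$ in degrees bounded within $\Delta_{\ell'}$ are compared birationally), such that $\nu_\ell(z)=\widehat\nu_{2\ell}(\hat x)$. Then $\hat x/z$ is a $\widehat\nu_{2\ell}$-unit. By Corollary \ref{isomorphism}, after multiplying $\hat x$ by a suitable $w\in R$ with $\nu_\ell(w)$ chosen as in Remark \ref{principality}, we get $\init(\hat x w)=\init(z')\cdot u$. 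Here $z'\in R$ and $u$ is an element of $\kappa(\mathbf H_{2\ell})$, realized by a $\widehat\nu_{2\ell}$-unit $\hat u\in\widehat R_{\tilde H_{2\ell}}$. The contribution at level $\ell$ is thereby made algebraic.

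In the second step, the unit $\hat u$ has $\widehat\mu_{2\ell}$-value in $\Delta_\ell$. Its image in $\widehat R/\tilde H_{2\ell+1}$, after clearing the denominator from $\widehat R\setminus\tilde H_{2\ell}$, can be treated by the induction hypothesis for $\widehat\mu_{2\ell+2}$. That hypothesis gives $y_1\in\gr_{\mu_{\ell+1}}\frac R{P_\ell}$ with value in $\Delta_{\ell'}$ (or in $\Delta_\ell$) such that $\init(\hat u)y_1$ is algebraic. We lift $y_1$ to an element of $R\setminus P_\ell$, so its $\nu_\ell$-part is zero. Combining, $y=\init(w)\,y_1$ (times the initial form of the denominator) works. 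Its order lies in $\Delta_{\ell'}$ because each factor has $\nu_\ell$-component controlled by $ord\ x$: when $\ell'\ge\ell$, the $\widehat\nu_{2\ell}$-value of $x$ is $0$, so $w$ can be taken a unit at level $\ell$.

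The main obstacle is the bookkeeping in the second step. The initial form of a product with a non-algebraic unit $\hat u$ must be compatible with the composite graded structure, so that $\init_{\widehat\mu_{2\ell}}(\hat u)$ really corresponds to $\init_{\widehat\mu_{2\ell+2}}$ of its residue in $\widehat R/\tilde H_{2\ell+1}$. In addition, denominators coming from localization at $\tilde H_{2\ell}$ must be shown to be removable without leaving $\Delta_{\ell'}$. We would handle the denominators by multiplying through by an element of $\widehat R\setminus\tilde H_{2\ell}$, whose value lies in $\Delta_\ell$, and applying the induction hypothesis to it as well.
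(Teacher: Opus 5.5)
Your proposal is correct and follows essentially the paper's argument. You split the lift of $x$ into an algebraic factor from the principal ideal $\mathbf P_{\bar\beta}$ (with denominators cleared in $R$) times a level-$\ell$ unit, write that unit as $w/s$ with $w,s\notin\tilde H'_{2\ell}$, and apply the induction hypothesis for $\widehat\mu_{2\ell+2}$ through the comparison of $R$ with $R'$; the only difference is that you obtain the factorization from Corollary \ref{isomorphism}, whereas the paper takes it directly from Remark \ref{principality}(ii). Your separate treatment of degree zero and of the levels $\ell'\ge\ell$ (where $\widehat\nu_{2\ell}(x)=0$ and the induction hypothesis for $R$ itself applies, with no passage to $R'$) fills in what the paper leaves implicit; just note that $\gr_\nu R\subset\gr_\nu R'$ is birational but in general not scalewise-birational, so it matters, as in your argument, that $R'$ is used only at the coarsest level $\ell-1$.
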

\begin{proof} In the proof that follows we will keep in mind the following commutative diagram
\begin{equation}\label{eq:diagram}
\xymatrix{
&&&\gr_{\mu_\ell}\frac{R'}{P'_{\ell-1}}&\hookrightarrow&\gr_{\widehat\mu_{2\ell}}\frac{\widehat{R'}}{\tilde H'_{2\ell-1}}\\
&\gr_{\mu_{\ell+1}}\frac R{P_\ell}&\hookrightarrow&\gr_{\mu_{\ell+1}}\frac {R'}{P'_\ell}\ar@{^{(}->}[u]&\hookrightarrow&\gr_{\widehat\mu_{2\ell+2}}\frac{\widehat{R'}}{\tilde H'_{2\ell}}\ar@{^{(}->}[u]}
\end{equation}
of natural inclusions, valid for all $R'$ in $\Tt$. Here the lower row is a composition of two birational injections of graded algebras, where the second inclusion is scalewise-birational by the induction hypothesis, and the top row is an inclusion of graded algebras whose scalewise birationality we want to prove.

Take a homogeneous element $\bar x\in\gr_{\widehat\mu_{2\ell}}\frac{\widehat R}{\tilde H_{2\ell-1}}$ and let $x$ be a representative of $\bar x$ in $\frac{\widehat R}{\tilde H_{2\ell-1}}$. Let $\bar\beta=\widehat\mu_{2\ell}(x)$. Take an element $y\in\mathbf P_{\bar\beta}$ and $u\in\frac{\widehat{\bf R}_{\mathbf H_{2\ell}}}{\mathbf
H_{2\ell-1}\widehat{\bf R}_{\mathbf H_{2\ell}}}\setminus\mathbf H_{2\ell}\frac{\widehat{\bf R}_{\mathbf H_{2\ell}}}{\mathbf H_{2\ell-1}\widehat{\bf R}_{\mathbf H_{2\ell}}}$ such that $x=yu^{-1}$ (the existence of $y$ and $u$ follows from Remark \ref{principality} (ii)).

Let $R'$ be such that $y\in\frac{R'_{P_\ell}}{P'_{\ell-1}R'_{P_\ell}}$ and $u\in\frac{\widehat R'_{\tilde H'_{2\ell}}}{\tilde H'_{2\ell-1}\widehat R'_{\tilde H'_{2\ell}}}\setminus\tilde H'_{2\ell}\frac{\widehat R'_{\tilde H'_{2\ell}}}{\tilde H'_{2\ell-1}\widehat R'_{\tilde H'_{2\ell}}}$. Write $y=\frac{y_1}{y_2}$ with $y_1,y_2\in\frac{R}{P_{\ell-1}}$. Write $u$ as $u=\frac ws$, where
\begin{equation}
w,s\in\frac{\widehat R'}{\tilde H'_{2\ell-1}}\setminus\frac{\tilde H'_{2\ell}}{\tilde H'_{2\ell-1}}\label{eq:wsnotin}
\end{equation}
for some $R'$ in $\Tt$.

Let $\bar\ $ denote the operation of taking the natural image of an element of $\frac{\widehat{\bf R}}{\mathbf H_{2\ell-1}}$ in $\gr_{\widehat\mu_{2\ell}}\frac{\widehat{\bf
R}}{\mathbf H_{2\ell-1}}$, its initial form with respect to the filtration defined by $\widehat\mu_{2\ell}$. The inclusion \eqref{eq:wsnotin} implies that $\bar w,\bar s\in\gr_{\widehat\mu_{2\ell+2}}\frac{\widehat{R'}}{\tilde H'_{2\ell}}$. In view of the birationality of the second row of diagram \eqref{eq:diagram}, there exist $\bar w_1,\bar w_2,\bar s_1,\bar s_2\in\gr_{\mu_{\ell+1}}\frac {R}{P_\ell}$ satisfying $\bar w=\frac{\bar w_1}{\bar w_2}$ and $\bar s=\frac{\bar s_1}{\bar s_2}$.

Putting together all of the above, we obtain
$$
\bar x(\bar y_2\bar w_1\bar s_2)\in\gr_{\mu_\ell}\frac{R}{P_{\ell-1}}
$$
and $\bar y_2\bar w_1\bar s_2\in\gr_{\mu_\ell}\frac {R}{P_{\ell-1}}$, the latter containment implying that
$$
ord(\bar y_2\bar w_1\bar s_2)\in\Delta_{\ell-1}.
$$
This completes the proof of  the lemma.
\end{proof}

We put $\hat\nu_-=\widehat\mu_2=\hat\nu_2\circ\dots\circ\hat\nu_{2r}$, where for each $\ell$ the valuation $\hat\nu_{2\ell}$ is centered in the local ring
$\frac{\widehat{\mathbf R}_{\mathbf H_{2\ell}}}{\mathbf  H_{2\ell-1}\widehat{\mathbf R}_{\mathbf H_{2\ell}}}$.
\medskip

This completes the construction of the valuation $\hat\nu_-$.

The scalewise birationality of the extension $\hat\nu_-$ of $\nu$ to the ring $\frac{\widehat R}{\tilde H_1}$ is given by Lemma \ref{scalewise-birational}, applied with $\ell=1$. This completes the proof of Theorem \ref{teissier}.\hfill$\Box$

\affiliationone{
   F. J. Herrera Govantes and M. A. Olalla Acosta\\
   Departamento de \'Algebra\\
   Facultad de Matem\'aticas\\
   Calle Tarfia s/n\\
   41012 Sevilla Spain
   \email{jherrera@algebra.us.es\\
   miguelolalla@us.es}}
\affiliationtwo{
   M. Spivakovsky\\
   Institut de Math\'ematiques de Toulouse\\
   UMR 5219 du CNRS,\\
   Universit\'e Paul Sabatier\\
   118, route de Narbonne\\
   31062 Toulouse cedex 9, France.
   \email{mark.spivakovsky@math.univ-toulouse.fr}}
\affiliationthree{
   B. Teissier\\
   Universit\'e Paris Cit\'e and Sorbonne Universit\'e, CNRS, IMJ-PRG, F-75013 Paris, France

 \email{bernard.teissier@imj-prg.fr}}


\begin{thebibliography}{99}
\bibitem{A} S. Abhyankar, {\em Local uniformization on algebraic surfaces over ground fields of characteristic $p\ne0$}, Ann. of Math., 63 (1956) 491--526.
\bibitem{Cut} S. D. Cutkosky, {\em Extensions of Valuations to the Henselization
and Completion}, Acta Math. Vietnam (2019) 44:159--172.
\bibitem{CE} S. D. Cutkosky and S. El Hitti, {\em Formal prime ideals of infinite value and their algebraic resolution}, Ann. Fac. Sci. Toulouse Math. (6) 19 (2010), no. 3-4, 635--649.
\bibitem{CG} S. D. Cutkosky and L. Ghezzi, {\em Completions of valuation rings}, Contemp. Math. 386 (2005), 13--34.
\bibitem{CT} S.D. Cutkosky and B. Teissier, {\em Semigroups of valuations on local rings}, Michigan Math. J., Vol. 57 (2008), 173-194.
\bibitem{EGA} A. Grothendieck, J. Dieudonn\'e, El\'ements de G\'eom\'etrie alg\'ebrique, Chap. IV, Pub. Math. IHES, No.
24, 1965.
\bibitem{HeSa} W. Heinzer and J. Sally, {\em Extensions of valuations to the completion of a local domain.} J. Pure Appl. Algebra, Vol. 71, no 2--3, pp. 175--186, (1991).
\bibitem{HOS} J. Herrera, M.A. Olalla, M. Spivakovsky, {\em Valuations in algebraic field extensions}, J. Algebra 312 (2007), no. 2, 1033--1074.
\bibitem{HOST1} J. Herrera, M. A. Olalla, M. Spivakovsky, B. Teissier, {\em Extending a valuation centered in a local domain to its formal completion}, Proc. London Math. Soc. (3) 105 (2012) 571--621,
\bibitem{HOST2} J. Herrera, M. A. Olalla, M. Spivakovsky, B. Teissier, {\em Extending valuations to formal completions}, in ``Valuation theory in interaction",  Proceedings of the second international conference on valuation theory, Segovia--El Escorial, 2011. Edited by A. Campillo, F-V. Kuhlmann and B. Teissier. European Math. Soc. Publishing House, Congress Reports Series, Sept. 2014, pp 252--265.
\bibitem{L} J. Lipman, {\em Desingularization of two-dimensional schemes},
Ann. Math. 107 (1978) 151--207.
\bibitem{Mat} H. Matsumura, {\em Commutative Algebra.} Commutative algebra. Second edition. Mathematics Lecture Note Series, 56. Benjamin/Cummings Publishing Co., Inc., Reading, Mass., 1980.
\bibitem{Nag} M. Nagata, {\em Local Rings.}, Interscience Publishers, 1960.
\bibitem{Spi1} M. Spivakovsky, {\em Valuations in function fields of surfaces.} Amer. J. Math., 112, 1, 107--156 (1990).
\bibitem{Spi2} M. Spivakovsky, {\em Resolution of singularities~I: Local Uniformization of an equicharacteristic quasi-excellent local domain}, in preparation.
\bibitem{Te} B. Teissier, {\em Valuations, deformations, and toric geometry},  Proceedings of the Saskatoon Conference and Workshop
on valuation theory (second volume), F-V. Kuhlmann, S. Kuhlmann, M. Marshall, editors,  Fields Institute Communications, {\bf 33},
2003, 361-459.
\bibitem{Te2} B. Teissier, {\em Overweight deformations of affine toric varieties and local uniformization}, in "Valuation theory in interaction",  Proceedings of the second international conference on valuation theory, Segovia--El Escorial, 2011. Edited by A. Campillo, F-V. Kuhlmann and B.Teissier. European Math. Soc. Publishing House, Congress Reports Series, Sept. 2014, 474--565.
\bibitem{V1} M. Vaqui\'e, {\em Valuations}, {\it in} ``Resolution of Singularities, a research textbook in tribute to Oscar Zariski'', Birkh\"auser, Progress in Math. No. 181, 2000.
\bibitem{V2} M. Vaqui\'e, {\em Famille admissible de valuations et  d\'efaut d'une extension}, J. Algebra 311 (2007), no. 2, 859--876.
\bibitem{Z} O. Zariski, {\em Local uniformization theorem on algebraic varieties}, Ann. of Math., 41 (1940), 852--896.
\bibitem{ZS} O. Zariski, P. Samuel {\em Commutative Algebra, Vol. II}, Springer-Verlag (1960).
\end{thebibliography}
\end{document}